\newtheorem{theorem}{Theorem}[section]
\newtheorem{lemma}[theorem]{Lemma}
\newtheorem{claim}[theorem]{Claim}
\newtheorem{proposition}[theorem]{Proposition}
\theoremstyle{remark}
\theoremstyle{definition}
\newtheorem{problem}{Problem}
\begin{document}
\title{On the Connectivity and Diameter of Geodetic Graphs}
\author{Asaf Etgar, Nati Linial\thanks{Supported in part by grant 659/18 of the Israel Science Foundation}}
\affil{The Hebrew University of Jerusalem}
\maketitle
\begin{abstract}
    A graph $G$ is \emph{geodetic} if between any two vertices there exists a unique shortest path. In 1962 Ore raised the challenge to characterize geodetic graphs, but despite many attempts, such characterization still seems well beyond reach. We may assume, of course, that $G$ is $2$-connected, and here we consider only graphs with no vertices of degree $1$ or $2$. We prove that all such graphs are, in fact $3$-connected. We also construct an infinite family of such graphs of the largest known diameter, namely $5$.
\end{abstract}
\section{Introduction}
In this work we consider loopless, undirected graphs $G = (V,E)$. We think of a path in $G$ as a sequence of vertices $P = (v_1,\ldots v_k)$, and the subpath of $P$ from $v_j$ to $v_l$ is denoted $P(v_j,v_l)$. The length of a path $P$ is denoted $|P|$, and path concatenation is denoted by $*$. The distance between $u$ and $v$ is $d_G(v,u)=d(v,u)$, and is the length of a shortest $u,v$ path. For clarity, we occasionally add an index indicating the graph in which some parameter or quantity is calculated.

The notion of geodetic graphs was introduced by Ore \cite{ore1962theory} as a natural extension of trees: a tree is a graph in which between any two vertices there exists a unique simple path, and hence a unique shortest path (i.e - a geodesic). Ore purposed an extended definition, and asked in which simple graphs geodesics are unique. Some simple examples are trees, complete graphs, and odd-length cycles. 
Specifically, Ore raised the challenge to characterize geodetic graphs. Despite many attempts, a complete characterization still seems beyond reach.

There are easy necessary and sufficient  properties for a graph to be geodetic, the following can be easily proved:
\begin{claim}
A graph $G$ is not geodetic if and only if it contains an even circuit $C$ with two vertices $u,v\in C$ such that $d_C(u,v) = d_G(u,v) = \frac{|C|}{2}$.
\end{claim}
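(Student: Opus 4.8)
The statement is an equivalence, so I would prove the two implications separately, starting with the easier ``if'' direction.

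Suppose $G$ contains an even circuit $C$ and vertices $u,v\in C$ with $d_C(u,v)=d_G(u,v)=\frac{|C|}{2}$, and write $d=\frac{|C|}{2}$. Since $d_C(u,v)=d$ is exactly half the length of $C$, the two arcs of $C$ joining $u$ to $v$ must each have length $d$: their lengths sum to $|C|=2d$ and the shorter equals $d_C(u,v)=d$, forcing both to be $d$. Call these arcs $A_1$ and $A_2$; they are distinct $u,v$ paths, each of length $d=d_G(u,v)$. A $u,v$ walk whose length equals $d_G(u,v)$ is a geodesic, so $A_1$ and $A_2$ are two distinct shortest $u,v$ paths, whence $G$ is not geodetic.

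For the ``only if'' direction, assume $G$ is not geodetic. Among all pairs of vertices admitting two distinct shortest paths, I would choose one, $(u,v)$, minimizing $d_G(u,v)$, and fix two distinct shortest $u,v$ paths $P_1,P_2$. The crux is to show $P_1$ and $P_2$ are internally disjoint. Suppose they shared a vertex $w\neq u,v$; then $w$ is internal to both. Since subpaths of geodesics are geodesics, $P_1(u,w),P_2(u,w)$ are shortest $u,w$ paths and $P_1(w,v),P_2(w,v)$ are shortest $w,v$ paths, while $d_G(u,w)<d_G(u,v)$ and $d_G(w,v)<d_G(u,v)$. By minimality of $d_G(u,v)$, the shortest path is unique for both smaller pairs, forcing $P_1(u,w)=P_2(u,w)$ and $P_1(w,v)=P_2(w,v)$, hence $P_1=P_2$, a contradiction. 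Therefore $P_1$ and $P_2$ meet only at $u$ and $v$, so $C:=P_1*\overline{P_2}$ (the concatenation of $P_1$ with $P_2$ reversed) is a simple circuit of even length $2\,d_G(u,v)$ on which $u$ and $v$ are antipodal, giving $d_C(u,v)=d_G(u,v)=\frac{|C|}{2}$ as required.

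The routine ingredients are the ``if'' direction and the fact that subpaths of geodesics are geodesics; the real work is the minimality argument, which is exactly what upgrades an arbitrary pair of shortest paths (whose union could be merely an even closed walk with repeated vertices) into two \emph{internally disjoint} geodesics that close up into a genuine even circuit. I expect the one point needing care is verifying that the resulting circuit realizes $d_C(u,v)=\frac{|C|}{2}$, i.e.\ that neither arc furnishes a $u,v$ shortcut shorter than $d_G(u,v)$; this holds because each arc already has length exactly $d_G(u,v)$.
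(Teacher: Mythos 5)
Your proof is correct. The paper itself offers no argument here --- the claim is stated as one of the properties that ``can be easily proved'' --- so there is nothing in-paper to compare against, but your write-up is the standard complete argument: the ``if'' direction rests on the observation that a $u,v$ walk of length $d_G(u,v)$ is automatically a simple path, and in the ``only if'' direction the choice of a counterexample pair $(u,v)$ minimizing $d_G(u,v)$, combined with the fact that subpaths of geodesics are geodesics, is exactly the device needed to force the two geodesics to be internally disjoint, so that their union is a genuine even cycle (of length $2d_G(u,v)$, with $u,v$ antipodal) rather than merely an even closed walk. Two minor points you could note in passing: since the graphs are simple, $d_G(u,v)=1$ cannot occur for a pair with two distinct geodesics, so your circuit has length at least $4$; and the equivalence tacitly assumes $G$ connected (as the paper does throughout), since a disconnected graph would fail geodeticity vacuously while containing no such circuit.
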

\begin{claim}\label{claim: blocks are geodetic}
A graph $G$ is geodetic if and only if each block of $G$ is geodetic.
\end{claim}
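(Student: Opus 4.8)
The plan is to reduce the global geodetic condition to a local one via the block--cut tree structure, proving both directions from a single structural observation. The crucial fact I would establish first is that shortest paths respect the block decomposition: if $u$ and $v$ lie in a common block $B$, then every shortest $u$--$v$ path in $G$ lies entirely inside $B$, and consequently $d_B(u,v) = d_G(u,v)$. This follows because any $u$--$v$ path that leaves $B$ must re-enter $B$ through the same cut vertex it used to exit, violating simplicity; since shortest paths are simple, they never leave $B$.

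For the forward direction, suppose $G$ is geodetic and let $B$ be any block. Given $u, v \in B$, the observation above shows that the shortest $u$--$v$ paths in $B$ coincide exactly with the shortest $u$--$v$ paths in $G$. Since $G$ is geodetic there is a unique such path, hence $B$ is geodetic.

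For the reverse direction, suppose every block is geodetic and take arbitrary $u, v$ in the same component. The block--cut tree supplies a unique sequence of blocks $B_1, \ldots, B_k$ with $u \in B_1$ and $v \in B_k$, where consecutive blocks $B_i, B_{i+1}$ meet at a unique cut vertex $c_i$. Every $u$--$v$ path must pass through $c_1, \ldots, c_{k-1}$ in this order and must traverse block $B_i$ between $c_{i-1}$ and $c_i$ (with $c_0 = u$ and $c_k = v$). Hence $d_G(u,v) = \sum_{i=1}^{k} d_{B_i}(c_{i-1}, c_i)$, and a shortest $u$--$v$ path is precisely a concatenation of shortest paths within each block. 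Because each block is geodetic, every such segment is unique, so the whole shortest path is unique and $G$ is geodetic.

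The main obstacle, and the only step requiring genuine care, is justifying that a shortest path is forced through the intervening cut vertices in the correct order and confined to the appropriate blocks between them. I would handle this by induction on the number $k$ of blocks, using the defining property that the cut vertex $c_i$ separates $u$ from $v$ so that every $u$--$v$ path contains $c_i$; iterating this separation argument together with simplicity of shortest paths pins down both the ordering of the cut vertices and the confinement of each segment to its block.
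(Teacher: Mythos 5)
Your proof is correct: the key observation that shortest paths between two vertices of a block never leave the block (since re-entry would force a repeated cut vertex) gives the forward direction immediately, and the block--cut-tree decomposition of shortest paths handles the converse. The paper states this claim without proof, remarking only that it ``can be easily proved,'' and your argument is precisely the standard one the authors have in mind; the only detail worth a passing remark in a write-up is that when $u$ or $v$ is itself a cut vertex, the first and last blocks of the sequence are determined by the direction of the block--cut-tree path rather than by membership alone, which your induction on $k$ absorbs without difficulty.
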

Here a \emph{block} is a maximal $2$-connected component of $G$. 
There is clearly no loss in generality if we restrict our attention to $2$-connected graphs.
By \autoref{claim: blocks are geodetic}, since a vertex of degree $1$ is a block, it suffices to assume all vertices have degree no less than $2$. Moreover, the following claim is given in \cite{stemple1968planar}:
\begin{claim}
    Let $G$ be a geodetic graph, and let $P = v_1,\ldots, v_k$ be a path with $\deg(v_1),\deg(v_k)\ge 3$ and $\forall i\ne 1,k\; \deg(v_i) = 2$. Then $P$ is the $v_1,v_k$ geodesic.
\end{claim}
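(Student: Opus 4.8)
The plan is to reduce the statement to a single distance computation: $P$ is the $v_1,v_k$ geodesic if and only if $d(v_1,v_k)=k-1$. Indeed, $P$ is a $v_1,v_k$ path of length $k-1$, so if $d(v_1,v_k)=k-1$ then $P$ is a shortest path, and by uniqueness of geodesics in the geodetic graph $G$ it is \emph{the} geodesic. I would therefore assume for contradiction that $d(v_1,v_k)=\ell<k-1$ and let $Q$ be the (unique) $v_1,v_k$ geodesic, so $|Q|=\ell$. The first thing to establish is that $Q$ is internally disjoint from $P$: since every internal vertex $v_i$ has degree $2$, its only neighbours are $v_{i-1}$ and $v_{i+1}$, so a simple path that ever enters the chain $v_2,\dots,v_{k-1}$ must traverse it from one endpoint to the other; as $Q$ starts at $v_1$ and ends at $v_k$, either $Q=P$ (impossible, since $|Q|<|P|$) or $Q$ avoids $v_2,\dots,v_{k-1}$ entirely. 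Hence $C:=P\cup Q$ is a genuine cycle of length $(k-1)+\ell$.

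Next I would analyse the distances $a_i:=d(v_1,v_i)$ along the chain. Because each internal $v_i$ has degree $2$, any shortest $v_1,v_i$ path reaches $v_i$ through $v_{i-1}$ or $v_{i+1}$, so $a_i=\min(a_{i-1},a_{i+1})+1$; together with $a_1=0$ and $a_k=\ell$ this forces the ``tent'' profile $a_i=\min(i-1,\ \ell+k-i)$. The cleanest consequence is a local dichotomy at the last good index: let $t$ be the largest index with $a_t=t-1$, so that $2\le t\le k-1$ and $v_t$ is internal. If $a_{t+1}=t-2$, then concatenating a $v_1,v_{t+1}$ geodesic with the edge $v_{t+1}v_t$ produces a $v_1,v_t$ geodesic entering $v_t$ through $v_{t+1}$, whereas $P(v_1,v_t)$ enters through $v_{t-1}$; these are two distinct geodesics of length $t-1$, contradicting uniqueness.

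The only surviving possibility is $a_{t+1}=a_t=t-1$, i.e.\ the adjacent vertices $v_t,v_{t+1}$ are equidistant from $v_1$; equivalently $|C|=(k-1)+\ell$ is odd. When $|C|$ is \emph{even} the contradiction is immediate from the even-circuit criterion (the first claim above): taking $u=v_1$ and $v=v_{1+|C|/2}$, both the chain arc and the complementary arc of $C$ have length $|C|/2$, and the tent formula gives $d_G(u,v)=\min(|C|/2,\ |C|/2)=|C|/2$, so $d_C(u,v)=d_G(u,v)=|C|/2$ and $G$ is not geodetic. The genuinely hard case is the odd one, where $C$ alone cannot violate the criterion (odd cycles are geodetic) and the hypothesis $\deg(v_1),\deg(v_k)\ge 3$ must finally be used: the extra edge at an endpoint has to be converted into an even circuit $C'$ carrying a pair $u,v$ with $d_{C'}(u,v)=d_G(u,v)=|C'|/2$. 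I expect exploiting this third edge at $v_1$ or $v_k$ to be the main obstacle, since it is precisely the point at which the degree condition — rather than mere uniqueness — does the work.
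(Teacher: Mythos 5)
Your preparatory work is correct, and up to the parity split it is a clean and fully rigorous argument: the reduction to $d(v_1,v_k)=k-1$, the observation that the geodesic $Q$ must be internally disjoint from the chain (so $C=P\cup Q$ is a cycle of length $k-1+\ell$), the tent profile $a_i=\min(i-1,\,\ell+k-i)$, and the even case, where the two $v_1,v_t$ geodesics arriving via $v_{t-1}$ and via $v_{t+1}$ contradict uniqueness, are all sound. (Your secondary appeal to the even-circuit criterion is redundant next to that direct argument, but harmless.) Note that the paper itself supplies no proof here --- it quotes the claim from \cite{stemple1968planar} --- so the only question is whether your argument stands on its own. It does not yet, and the problem is worse than the ``main obstacle'' you flag.

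The odd case cannot be closed from the hypotheses as literally stated, because the statement as quoted is \emph{false} without $2$-connectivity. Take the $5$-cycle $v_1v_2v_3v_4qv_1$ and glue a triangle onto $v_1$ and another onto $v_4$. Every block is an odd cycle, so the graph is geodetic by \autoref{claim: blocks are geodetic}; we have $\deg(v_1)=\deg(v_4)=4$, $\deg(v_2)=\deg(v_3)=2$ (and minimum degree $2$ overall), yet $d(v_1,v_4)=2<3$ and the geodesic is $v_1qv_4$, not $P=v_1v_2v_3v_4$. Here $|C|=5$ is odd --- exactly your surviving case, with $a=(0,1,2,2)$, $t=3$ --- and the ``third edges'' at $v_1$ and $v_4$ exist but lead into pendant blocks, so no amount of local work at the endpoints can manufacture your even circuit $C'$: the configuration is realized by a genuinely geodetic graph. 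The missing ingredient is the paper's standing reduction, made just before the claim is quoted, that $G$ is $2$-connected (Stemple proves the lemma for geodetic blocks). Any correct completion of your odd case must use $2$-connectivity in an essential way --- for instance, that $G-v_1$ is connected, so the third neighbor $w$ of $v_1$ can reach $v_k$ avoiding $v_1$, which is exactly what fails in the counterexample, where $w$ lies in a pendant triangle. As it stands, everything you have constrained in the odd case is internally consistent (geodesics to $v_i$ run along the chain for $i\le t$ and through $Q$ and $v_k$ for $i\ge t+1$), so no local parity argument at $v_1$ or $v_k$ can finish the proof; you need to add the $2$-connectivity hypothesis and a global second-path argument built on it.
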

By this claim, one can replace any path that consists of degree $2$ vertices by a single weighted edge. Therefore the question of geodeticity gains a more arithmetic flvaour.
Since our emphasis is combinatorial and geometric we will concentrate on graphs whose smallest degree is at least $3$.

There are not many families of geodetic graphs that are classified in full. The only constructive classifications known presently are planar geodetic graphs \cite{stemple1968planar}, which is extended to a classification of geodetic graphs homeomorphic to a complete graph \cite{stemple1979geodeticcompletegraph}. There is a considerable body of work classifying geodetic graphs of diameter $2$ \cite{stemple1974geodeticdiamtwo,scapellato1986geodeticdiamtwogeometric,blokhuis1988geodeticdiamtwo}, including a classification of such graphs. While some constructions are known, we do not know that they exhaust all possible geodetic graphs of diameter $2$. In this sense, the classification is lacking. Naturally - the question turns to higher diameters. Some properties of geodetic graphs of diameter $3$ are known \cite{parthasarathy1984geodeticdiamthree}. However, to the best of our knowledge the following is unknown:
\begin{problem}\label{diameter_three_existance}
Do there exist geodetic blocks $G$ of diameter $3$ with $\delta(G) \ge 3$? 
\end{problem}
Here $\delta(G)$ is the minimal degree of $G$. Progress on this problem has been very slow. Bridgland \cite{bridgland1983geodeticconvexity} constructed a family of geodetic blocks of diameter $4$ and arbitrarily large minimal degree. This construction was later generalized 
in several ways using block designs \cite{srinivasan1987construction}, 
yielding a family of geodetic blocks of diameter $5$. This construction has the largest diameter presently known.
Despite many attempts, we were unable to retrieve the latter paper. 
We therefore present these constructions along a different proof of their geodeticity.
A main problem that we raise is:

\begin{problem}\label{high_diameter_problem}
What is the largest possible diameter of a geodetic block with
minimal degree $\ge 3$? Can it be arbitrarily large?
\end{problem}

In the journey to classification, other properties of geodetic graphs were discovered. A graph is called\emph{ self centered} if its diameter equals its radius. Geodetic blocks of diameter $2$ are
known to have this property \cite{stemple1974geodeticdiamtwo}.  
Likewise, for blocks of diameter $3$ \cite{parthasarathy1984geodeticdiamthree}. Some connections to other graph properties were explored, namely by Zelinka \cite{zelinka1975geodetic}, Gorovoy and Zamiaikou \cite{gorovoy2021geodeticantipode}, and connections to other fields such as algebra and group theory \cite{elder2022rewriting,nebesky2002new}. We continue these lines of research, resulting in our main theorem:

\begin{restatable}{thm}{notwoconnected}\label{No2Connected}
Every $2$-connected geodetic graph $G$ with $\delta(G)\ge 3$ must be $3$-connected. This lower bound is tight as shown by the Petersen Graph.
\end{restatable}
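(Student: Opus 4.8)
The plan is to prove the contrapositive: assume $G$ is a $2$-connected geodetic graph with $\delta(G)\ge 3$ that is \emph{not} $3$-connected, and derive a contradiction. By assumption there is a $2$-vertex cut $\{a,b\}$ whose removal disconnects $G$ into at least two components. Let $A$ and $B$ be the vertex sets of two such components (plus the cut vertices), so that every path between a vertex of $A\setminus\{a,b\}$ and a vertex of $B\setminus\{a,b\}$ must pass through $a$ or through $b$. The key structural consequence is that for any $x\in A\setminus\{a,b\}$ and $y\in B\setminus\{a,b\}$, the unique geodesic $P_{xy}$ traverses exactly one of $a,b$; this lets me decompose distances as $d(x,y)=\min\bigl(d(x,a)+d(a,y),\ d(x,b)+d(b,y)\bigr)$, and uniqueness of geodesics forces the minimizer to be attained by exactly one of the two cut vertices (a tie would produce two distinct shortest paths, violating geodeticity).

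Next I would exploit this ``one cut vertex per geodesic'' dichotomy to build a forbidden even circuit and invoke the first Claim of the excerpt. The idea is to find two vertices $u,v$ on opposite sides together with the two cut vertices $a,b$ so that the two internally disjoint $a$--$b$ routes (one through each side) combine into an even circuit $C$ realizing $d_C=d_G=|C|/2$ between an appropriately chosen antipodal pair. Concretely, consider geodesics $a\to b$ restricted to $A$ and to $B$; since $G$ is geodetic there is a \emph{unique} $a,b$-geodesic, so at most one side contains it, and the other side yields a strictly longer $a$--$b$ path. I would then track, for each vertex on the ``cheap'' side, whether its geodesic to a fixed vertex on the other side goes through $a$ or through $b$, and use a parity/continuity argument along a path to locate a ``switch'' where two neighbors route through different cut vertices. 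Such a switch pins down a vertex whose two shortest routes to the far side, via $a$ and via $b$, have equal length, contradicting uniqueness --- or it directly exposes the even circuit $C$ of the first Claim.

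The degree hypothesis $\delta(G)\ge 3$ must enter to rule out the degenerate escape routes. A $2$-connected graph with a $2$-cut is generically fine, but $\delta\ge 3$ forbids the pathological configurations where one ``side'' is merely a single path or a thin gadget that can reroute freely; I expect to use it to guarantee that each side $A,B$ is genuinely ``fat'' enough that the cut vertices $a,b$ have neighbors on both sides, and that no vertex can sidestep the dichotomy by having all its shortest paths funnel through a single cut vertex in a way that avoids creating the forbidden circuit. In particular $\delta\ge 3$ should ensure that at least one of $a,b$ has at least two neighbors within one side, which is what makes the even circuit close up.

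The main obstacle I anticipate is the bookkeeping around the \emph{parity} of the circuit and ensuring the antipodal vertices $u,v$ actually satisfy $d_C(u,v)=d_G(u,v)$ rather than $d_G(u,v)<d_C(u,v)$ (which would mean $C$ has a geodesic shortcut and is harmless). The danger is that the globally shortest $u,v$-path leaves the circuit entirely; I must show, using the cut structure, that any $u,v$-geodesic is trapped on the circuit or passes through $\{a,b\}$ in a controlled way. Resolving this will likely require a careful case analysis on which cut vertex each relevant geodesic uses, combined with the uniqueness of the $a,b$-geodesic to force equality of the two half-lengths. Establishing that equality --- turning ``not $3$-connected'' into an \emph{exact} distance tie that seeds the even circuit --- is the crux, and everything else is comparatively routine.
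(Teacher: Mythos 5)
Your opening dichotomy is sound and matches the paper's \autoref{No Antipodal Distance difference}: a tie $d(u,a)+d(a,v)=d(u,b)+d(b,v)$, with neither route forced through the other cut vertex, yields two distinct geodesics. But the mechanism you propose for \emph{forcing} such a tie --- tracking along a path whether each vertex routes to a fixed far-side vertex $w$ via $a$ or via $b$, and locating a ``switch'' by a parity/continuity argument --- does not work as stated. Across a single edge each of $d(\cdot,a)$ and $d(\cdot,b)$ can change by $-1,0,+1$, so the discrepancy $\bigl(d(u,a)+d(a,w)\bigr)-\bigl(d(u,b)+d(b,w)\bigr)$ can jump by $2$, and there is no parity invariant in a graph with odd cycles; it can therefore change sign through the values $+1$ and $-1$ without ever vanishing. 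A switch between neighbors gives only a discrepancy of absolute value at most $1$, which produces neither a tie nor an even circuit $C$ with $d_C=d_G=|C|/2$. You flagged this obstacle yourself (the geodesic shortcutting off the circuit), but the proposal contains no device to overcome it, and indeed a bare $2$-cut simply does not force an exact tie. A second, smaller gap: your tie observation needs the hypothesis that the distance difference is strictly less than $d(a,b)$ to guarantee the route via $a$ avoids $b$ and vice versa (otherwise the two ``distinct'' paths may coincide); the paper states and uses exactly this condition ($|j|<\ell$).

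The paper supplies two ingredients your plan is missing. First, an extremal choice: among all $2$-cuts $\{x,y\}$ it takes one minimizing $\ell=d(x,y)$. Second, in place of an intermediate-value argument it stratifies each side by the difference $d(v,y)-d(v,x)=j$ into diagonals $R_j$ and $L_j$, proves via the tie argument that for $|j|<\ell$ at least one of $R_j,L_{-j}$ is empty, and then propagates emptiness and nonemptiness along diagonals using connectivity of the sides and $\delta\ge 3$ (\autoref{lemma: existance of SE edges} and \autoref{Find square}; this is where the degree hypothesis actually enters --- e.g.\ the internal vertex $x_1$ of the $x,y$ geodesic needs a third neighbor --- not, as you guessed, to make the sides ``fat''). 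The punchline is not a directly exhibited pair of equal geodesics but a descent: the structure collapses until $x$ has a unique neighbor $x_1$ toward one side, whence $\{x_1,y\}$ is again a cut with $d(x_1,y)<\ell$, contradicting minimality (or $x_1=y$, contradicting $2$-connectivity). The paper also first reduces to exactly two components of $G\setminus\{x,y\}$ by an even-cycle parity argument (\autoref{Two Components}), a case your sketch does not address. Without the extremal choice and the diagonal bookkeeping, the crux you yourself identified --- converting ``not $3$-connected'' into an exact distance tie --- remains unestablished, so the proposal as written has a genuine gap rather than being a complete alternative route.
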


\section{Geodetic Graphs and Connectivity}
In this section we prove \autoref{No2Connected}.
From here on we assume $G$ is geodetic with $\delta(G)\ge 3$. We denote the (unique) $v,u$ geodesic in $G$ by $\pi(v,u)$, and by convention we enumerate its vertices in order from $v$ to $u$. Arguing by contradiction, let $S=\{x,y\}$ be a vertex cut for which $d(x,y)$ is as small as possible, denote this distance by $\ell$. We denote by $\Pi=\pi(x,y)$ the $x,y$ geodesic, with vertices $\Pi= x, x_1, x_2, \ldots, x_{\ell-1}, y$. Let $A_1\ldots A_k$ the connected components of $G\setminus S$. Clearly, $x_1, x_2, \ldots, x_{\ell-1}$ all belong to the same connected component of $G\setminus S$, say they are in $A_1$.

\begin{lemma}\label{GeodeticComponents}
For every $i$, if $u,v\in A_i$, then $\pi_G(u,v)$ is contained in $A_i\cup \Pi$.
\end{lemma}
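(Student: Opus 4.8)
The plan is to prove this by contradiction, using the minimality of $\ell = d(x,y)$ among all $2$-cuts. Suppose $u,v \in A_i$ but the geodesic $\pi_G(u,v)$ leaves $A_i \cup \Pi$. Since $S = \{x,y\}$ separates $A_i$ from the rest of the graph, any path from $u$ to $v$ that exits $A_i$ must pass through $S$. So the geodesic $\pi(u,v)$ must contain $x$ or $y$ (or both), and after leaving $A_i$ it travels through some other component before returning. My first move is to locate where $\pi(u,v)$ re-enters: since $u,v \in A_i$, the geodesic starts in $A_i$, crosses into $S$, wanders outside, and comes back through $S$ into $A_i$.

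The key structural observation I would exploit is that the portion of $\pi(u,v)$ lying outside $A_i$ is itself a geodesic (subpaths of geodesics are geodesics) connecting two vertices of $S$, hence connecting $x$ and $y$. More precisely, if the geodesic exits at one element of $S$ and must return, and if both exit and entry used the \emph{same} vertex of $S$, then that single vertex would already separate the excursion — contradicting that $u,v$ lie in the same component and that the detour was necessary. So the excursion out of $A_i$ must leave through one of $x,y$ and return through the other, meaning $\pi(u,v)$ contains a subpath from $x$ to $y$ running entirely outside $A_1$ (in particular disjoint from the interior of $\Pi$). By uniqueness of geodesics, this subpath \emph{is} $\Pi = \pi(x,y)$. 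But $\Pi$'s interior vertices lie in $A_1$, so if $i \ne 1$ this already contradicts the excursion being outside $A_i$; and if $i = 1$, then the subpath coincides with $\Pi$, placing it inside $A_i \cup \Pi$ after all. Either way I extract a contradiction with the assumption that the geodesic strayed outside $A_i \cup \Pi$.

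The step I expect to be the genuine obstacle is ruling out the possibility that $\pi(u,v)$ both exits and re-enters through the \emph{same} cut vertex, say $x$, making an excursion into a component $A_j$ with $j \ne i$ and returning. Naively this seems forbidden because it would create two distinct $u,v$-paths, but one must argue carefully that the excursion cannot be a strictly shorter route — here is where I would invoke the minimality of $\ell$. If such a one-vertex excursion were part of a geodesic, I would try to build a smaller $2$-cut or a shorter pair of separated vertices, contradicting the choice of $S$ and $\ell$. Concretely, I would compare $d(x, \cdot)$ along the two branches and use that $x$ appearing twice on a shortest path is impossible (a geodesic is a simple path), which forces the two endpoints of the excursion to be the \emph{distinct} vertices $x$ and $y$.

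Once the excursion is pinned down to run from $x$ to $y$, the rest is a clean application of geodesic uniqueness: the unique $x,y$ geodesic is $\Pi$, whose interior sits in $A_1$, so the supposed detour cannot avoid $A_1 \cup \Pi$ and we are done. I would organize the writeup as: (1) reduce to the case where $\pi(u,v)$ meets $S$; (2) show it cannot stay inside a single cut vertex, so it realizes an $x$-to-$y$ subpath; (3) identify that subpath with $\Pi$ via uniqueness; (4) conclude containment in $A_i \cup \Pi$.
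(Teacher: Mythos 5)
Your proposal is correct and follows essentially the same route as the paper: the geodesic can exit $A_i$ only through $x$ or $y$, simplicity forces any excursion to run from one cut vertex to the other, and uniqueness of the $x,y$ geodesic identifies that excursion with $\Pi$, so $\pi_G(u,v)\subseteq A_i\cup\Pi$. The minimality of $\ell$ you contemplate invoking is never actually needed --- the fact that a geodesic is a simple path (so $x$ cannot appear twice) already settles the step you flagged as the obstacle, which is exactly what the paper's terse proof leaves implicit.
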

\begin{proof}
If $\pi_G(u,v)$ is not contained in $A_i\cup \Pi$, then it must leave $A_i$ and come back. But the only way to exit $A_i$ is via $x$ or $y$. But then $\pi_G(u,v) = \pi_G(u,x)*\Pi * \pi_G(y,v)$, since $\Pi$ is the $x,y$ geodesic. Thus $\pi_G(u,v)$ is contained in $A_i\cup \Pi$, as claimed.
\end{proof}

\begin{lemma}\label{Two Components}
The graph $G\setminus S$ has exactly two connected components, i.e., $k=2$.
\end{lemma}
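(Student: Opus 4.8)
The plan is to argue by contradiction, assuming $G \setminus S$ has at least three components $A_1, A_2, A_3, \ldots$. Recall that $S = \{x,y\}$ is a minimum-distance $2$-cut with $d(x,y) = \ell$, and the interior vertices $x_1, \ldots, x_{\ell-1}$ of the geodesic $\Pi = \pi(x,y)$ all lie in one component, which we called $A_1$. The key structural fact I would exploit is \autoref{GeodeticComponents}: any geodesic between two vertices of $A_i$ stays inside $A_i \cup \Pi$. The first thing to observe is that for $i \ne 1$, the component $A_i$ contains none of the interior vertices of $\Pi$, so a geodesic between two vertices $u, v \in A_i$ that is forced to pass through $\Pi$ would have to traverse all of $\Pi$ from $x$ to $y$ (by the decomposition $\pi(u,v) = \pi(u,x) * \Pi * \pi(y,v)$ appearing in the previous lemma). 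This already suggests that the components $A_i$ for $i \ge 2$ are ``small'' or tightly constrained relative to $A_1$.

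\medskip

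First I would pin down the distances from $x$ and $y$ into a component $A_i$ with $i \ge 2$. Since $x$ and $y$ are the only gateways out of $A_i$, every vertex $w \in A_i$ has $\pi(x,w)$ and $\pi(y,w)$ entering $A_i$ immediately (the geodesic cannot detour through $\Pi$ or another component). I would next consider a neighbor $a$ of $x$ inside $A_i$ and a neighbor $b$ of $y$ inside $A_i$ (these exist because $S$ is a minimal cut, so both $x$ and $y$ must have neighbors in every component — otherwise a single vertex would separate, contradicting $2$-connectivity or the minimality of the cut). The heart of the argument is a distance/counting comparison: I would try to show that within $A_i \cup S$ one can build an even circuit through $x$ and $y$ whose two halves are geodesics, or more directly, that having a second such component $A_2$ alongside $A_3$ forces a violation of uniqueness of geodesics. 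Concretely, the minimality of $\ell = d(x,y)$ means $x$ and $y$ cannot be joined by a path of length $< \ell$ within any single component plus $S$; combining a short $x$–$y$ route through $A_2$ with the route through $A_3$ should produce two distinct $x$–$y$ geodesics or a forbidden even circuit as in the unnumbered Claim about non-geodeticity.

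\medskip

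The cleanest route, which I would pursue as the main line, is to show that $x$ and $y$ have a common neighbor, or are themselves adjacent, forcing $\ell$ to be very small, and then to derive that at most one component other than $A_1$ can exist. Specifically, pick vertices $u \in A_2$ and $v \in A_3$. By \autoref{GeodeticComponents} applied in each component, the geodesic $\pi(u,v)$ must exit $A_2$ through $S$, cross to $A_3$, and it equals $\pi(u,x) * \Pi * \pi(y,v)$ or $\pi(u,y)*\bar\Pi*\pi(x,v)$. This ties $d(u,v)$ rigidly to $\ell$. I would then exhibit two distinct shortest $u$–$v$ paths: one routed $u \to x \to \cdots \to y \to v$ and another routed $u \to y \to \cdots \to x \to v$, using the symmetric neighbors of $x,y$ in $A_2$ and $A_3$. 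For these to avoid contradicting geodeticity, the lengths must differ, which constrains the distances $d(x,u), d(y,u), d(x,v), d(y,v)$ so tightly that a third component cannot coexist with the second.

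\medskip

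The hard part will be ruling out the degenerate configurations and ensuring the two candidate $u$–$v$ paths I construct are genuinely distinct and genuinely shortest, rather than coinciding or being longer than the true geodesic. In particular I expect the delicate case to be when $\ell$ is small (say $\ell = 1$ or $2$), where $\Pi$ has few or no interior vertices and the distinction between ``passing through $\Pi$'' and ``passing through $S$'' collapses; there the $\delta(G) \ge 3$ hypothesis and the minimality of the cut must be used carefully to guarantee enough neighbors in each component to build the competing paths. I would handle these small-$\ell$ cases separately, leaning on the unnumbered even-circuit criterion to directly produce a forbidden even circuit spanning two of the components through both $x$ and $y$.
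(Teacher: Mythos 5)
There is a genuine gap here, and it sits precisely at what you yourself flag as ``the hard part.'' Your main line rests on a false structural claim: for $u \in A_2$ and $v \in A_3$, you assert that $\pi_G(u,v)$ equals $\pi(u,x)*\Pi*\pi(y,v)$ or $\pi(u,y)*\Pi^{-1}*\pi(x,v)$, i.e.\ that it traverses all of $\Pi$. But \autoref{GeodeticComponents} applies only to endpoints in the \emph{same} component: the decomposition through $\Pi$ arises there because a geodesic that leaves $A_i$ must re-enter it, hence passes through both $x$ and $y$. When $u$ and $v$ lie in \emph{different} components, the geodesic generically passes through exactly one of $x,y$ (say $u \to x \to v$, never touching $y$ or the interior of $\Pi$), so $d(u,v)$ is not ``tied rigidly to $\ell$'' at all. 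Consequently your two candidate routes $u \to x \to \cdots \to y \to v$ and $u \to y \to \cdots \to x \to v$ have no reason to be shortest, nor to have equal length: for arbitrary $u,v$ the quantities $d(u,x)+d(x,v)$ and $d(u,y)+d(y,v)$ simply differ in general, and no contradiction with geodeticity follows. Your proposal never supplies a mechanism that forces two distinct paths of \emph{equal} length, which is the entire content of the lemma.

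The paper closes exactly this gap with a parity pigeonhole that is absent from your sketch. For each component $A_i$ let $\pi_i$ be a shortest $x$--$y$ path inside the induced subgraph on $A_i \cup S$; among the three integers $\abs{\pi_1},\abs{\pi_2},\abs{\pi_3}$ two share a parity, so the corresponding paths concatenate to an \emph{even} cycle $C$, and one then takes a pair of $C$-antipodal vertices $v_0,v_1$, for which equality of the two arc lengths is automatic. If the same-parity pair involves $\Pi = \pi_1$, then $\abs{\pi_2}\ge\abs{\pi_1}+2$ lets one choose both antipodal points inside $A_2$, and \autoref{GeodeticComponents} (legitimately, since now both endpoints lie in $A_2$) identifies the geodesic with one arc while the other arc has the same length. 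If the pair is $\pi_2,\pi_3$, the antipodal points lie in different components, the two arcs are equal-length $v_0,v_1$ paths, and the unique geodesic must be strictly shorter and pass through $x$ or $y$ --- contradicting that $\pi_2,\pi_3$ were chosen shortest within $A_i\cup S$. Choosing the $\pi_i$ as \emph{extremal} ($x$--$y$ geodesics within each component plus $S$) is what rules out your worry that the competing paths might be ``longer than the true geodesic''; picking arbitrary $u,v$ and hoping the case analysis on small $\ell$ resolves itself, as your sketch does, cannot recover this. Your fallback suggestion (showing $x,y$ are adjacent or share a common neighbor to force $\ell$ small) is likewise unsubstantiated --- nothing in the hypotheses bounds $\ell$.
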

\begin{proof}
Suppose toward contradiction that $A_1,A_2,A_3\neq\emptyset$. Let $\pi_i$ to be an $x,y$ geodesic in the subgraph induced by $A_i\cup S$. (in particular $\pi_1 = \Pi$). At least two of the integers $|\pi_1|, |\pi_2|, |\pi_3|$ have the same parity, so the corresponding paths form a cycle $C$ of even length. We consider two cases:
\begin{enumerate}
    \item $C = \pi_1*\pi_2^{-1}$: Since $\abs{\pi_2}\ge\abs{\pi_1}+2$ we can find two vertices $v_0,v_1\in A_2$ which are antipodal points on $C$. However, $\pi_2(v_0,v_1) = \pi_G(v_0,v_1)$ by \autoref{GeodeticComponents}. But $\pi_C(v_0,x)*\Pi*\pi_C(y,v_1)$ is another $v_0,v_1$ path of the same length, contradicting geodeticity.
    
    \item $C = \pi_2* \pi_3^{-1}$: Let $v_0,v_1$ be $C$ - antipodal points with $v_0\in A_2, v_1\in A_3$.
    The two arcs of $C$ that $v_0, v_1$ define are two $v_0,v_1$ paths of equal length. By assumption $G$ is geodetic so $\pi_G(v_0,v_1)$ differs from both these paths. But $\pi_G(v_0,v_1)$ must traverse either $x$ or $y$. This means e.g., that $|\pi_G(v_0,x)| < |\pi_2(v_0,x)|$ contrary to the assumption that  $\pi_2$ is a shortest $x,y$ path in $A_2\cup S$.
\end{enumerate} \end{proof}

A \emph{graph contraction} between two graphs $\Gamma, \Omega$, is a function $f:V(\Gamma)\to V(\Omega)$ such that for any $vu\in E(\Gamma)$, either $f(v)f(u)\in E(\Omega)$ or $f(v) = f(u)$. Clearly, for any $u,v\in V(\Gamma)$ it holds that $d_{\Omega}(f(v),f(u))\le d_\Gamma(v,u)$. Therefore if $\Gamma$ is connected, so is $f[\Gamma]$.\\

Let $\ZZ^2_\infty$ be the graph with vertex set $\ZZ^2$ where $p, q\in \ZZ^2$ are neighbors whenever $\norm{p-q}_{\infty} = 1$.
We denote coordinates in this plane by $(\xi,\eta)$ and employ this graph as a visualization tool
for $G$. This is accomplished using the
mapping $\varphi:V(G)\to \ZZ^2$, where 
\begin{align*}
\varphi(v) =\begin{cases} 
(d(x,v), d(y,v)) & v\in A_1\cup S \\
(-d(y,v) + \ell,  -d(x,v) + \ell) & v\in A_2 \cup S
\end{cases}     
\end{align*}
(Recall that $\ell = d(x,y)$).  We denote the image of $G$ in $\ZZ^2_\infty$
by $\Omega = \varphi[G]$. Clearly $\varphi$ is a graph contraction, and therefore $\varphi[A_i]$ is a connected subgraph of $\Omega$. For any $j$ define 
\[
R_j = \cbk{v\in A_1\mid d(v,y) - d(v,x) = j}\qquad L_j = \cbk{v\in A_2\mid d(v,y) - d(v,x) = j}
\]
For ease of notation, we add $x$ to $R_\ell,L_\ell$ and $y$ to $R_{-\ell}$, $L_{-\ell}$. We denote $R = \bigcup_{j=-\ell}^\ell R_j$ and $L = \bigcup_{j=-\ell}^\ell L_j$. By  \autoref{Two Components} $G = R\cup L$, and $\varphi[L_j],\varphi[R_j]$ are included in the straight line $\{(\xi,\xi + j)|\xi\in \ZZ\}$.

\begin{figure}[H]
\centering
\begin{subfigure}{0.43\textwidth}
    \includegraphics{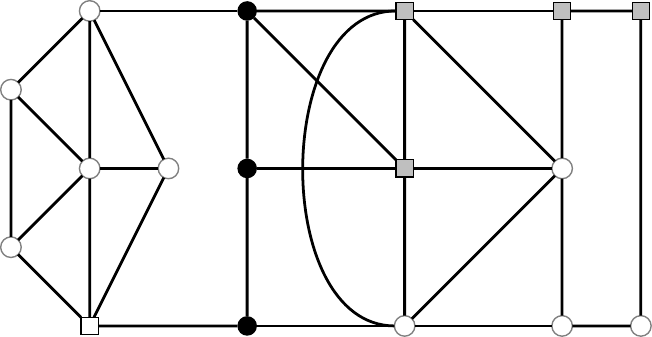}
\end{subfigure} 
\begin{subfigure}{0.45\textwidth}
    \includegraphics{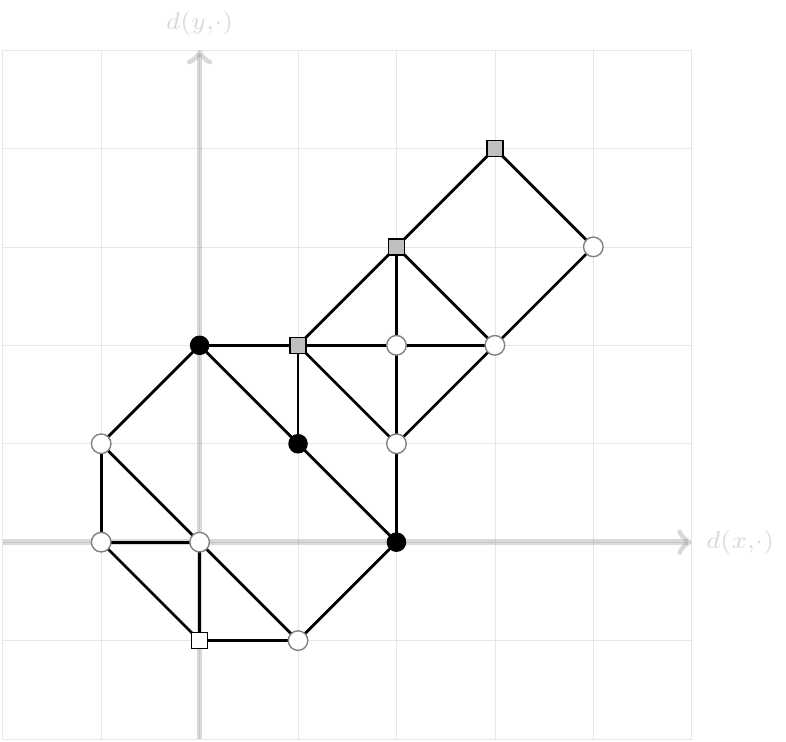}
\end{subfigure}

\caption{How $\varphi$ maps a $2$-connected graph. $\Pi = (x,x_1,y)$ are the black vertices, $R_1$'s vertices are the gray square vertices and $L_{-1}$ is the white square vertex.}    \end{figure}

In using $\varphi(G)$ as a visualization tool, we keep in mind that $\varphi$ is not injective. We note that $\varphi[\Pi]$ is the interval between $(0,\ell)$ and $(\ell,0)$, and that by geodeticity $\varphi^{-1}((j,\ell-j))$ is the $j$-th vertex in $\Pi$. In drawing $\varphi[R]$ and $\varphi[L]$, we note that $\varphi[R]$ is ``to the right" of $\varphi[\Pi]$, and $\varphi[L]$ is ``to the left" of $\varphi[\Pi]$. 
\begin{lemma}\label{NeighborsInPhi}
If $u,v$ are neighbors in $G$, $u\in R_j, v\in R_k$, then $\abs{k-j}\le 2$. Moreover, if $\abs{k-j} = 2$,
then the edge $\varphi(v)\varphi(u)$ is one of the two $ (\xi,\xi+j)\sim (\xi \pm1,\xi +j\mp 1)$.
\end{lemma}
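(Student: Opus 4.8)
The plan is to derive both assertions directly from the triangle inequality applied to $G$-distances, using nothing about $\varphi$ beyond its defining formula on $A_1 \cup S$. Write $\varphi(u) = (a,b)$ and $\varphi(v) = (c,e)$, so that $a = d(x,u)$, $b = d(y,u)$, $c = d(x,v)$, $e = d(y,v)$, and by the definitions of $R_j$ and $R_k$ we have $j = b - a$ and $k = e - c$. Since $u$ and $v$ are adjacent, for any vertex $w$ we have $\abs{d(w,u) - d(w,v)} \le d(u,v) = 1$; taking $w = x$ and $w = y$ gives $\abs{a - c} \le 1$ and $\abs{b - e} \le 1$.

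The first claim then follows from the identity
\[
k - j = (e - c) - (b - a) = (e - b) + (a - c),
\]
since each of the summands $e - b$ and $a - c$ lies in $\{-1,0,1\}$, whence $\abs{k-j} \le 2$.

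For the ``moreover'' part, assume $\abs{k - j} = 2$. As $k-j$ is a sum of two integers each bounded by $1$ in absolute value, this forces both summands to equal $\pm 1$ with a common sign. If $k - j = 2$ then $a - c = 1$ and $e - b = 1$, so, writing $\xi = a$ and using $\varphi(u) = (\xi, \xi + j)$, we get $\varphi(v) = (\xi - 1, \xi + j + 1)$; the case $k - j = -2$ is symmetric and yields $\varphi(v) = (\xi + 1, \xi + j - 1)$. These are precisely the two edges $(\xi, \xi + j) \sim (\xi \pm 1, \xi + j \mp 1)$ in the statement. The computation is essentially immediate, so I do not expect a genuine obstacle; the only point needing care is the sign bookkeeping in the equality case, where one must use that each coordinate difference is individually at most $1$ in absolute value, so that $\abs{k-j} = 2$ truly pins down both differences rather than being produced by a single large coordinate jump.
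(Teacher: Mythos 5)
Your proof is correct and follows exactly the route the paper intends: its entire proof is the sentence ``A simple application of the triangle inequality,'' and your argument is simply that application carried out in full, including the sign analysis in the equality case. Nothing to fix.
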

\begin{proof}
A simple application of the triangle inequality.
\end{proof}
\begin{lemma}\label{lemma: existance of SE edges}
Suppose $R_j, R_{j-2} \neq \emptyset$ whereas $R_{j-1} = \emptyset$, then there exists an edge between $R_j$ and $ R_{j-2}$.
\end{lemma}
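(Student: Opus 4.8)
The plan is to exploit the connectivity of the subgraph induced on $R$ together with the bounded level-jump property of \autoref{NeighborsInPhi}. First I would record that the subgraph of $G$ induced on $R = A_1\cup\{x,y\}$ is connected. Indeed, $A_1$ is a connected component of $G\setminus S$, and since $G$ is $2$-connected neither $x$ nor $y$ can separate $G$ on its own. If, say, $x$ had no neighbor in $A_1$, then every edge leaving $A_1$ would pass through $y$, making $y$ a cut vertex that separates $A_1$ from $A_2$, a contradiction. Hence each of $x,y$ has a neighbor in $A_1$, so adjoining them to $A_1$ keeps the induced subgraph connected.

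The core idea is that the empty level $R_{j-1}$ acts as a barrier. Set $U = \bigcup_{k\ge j} R_k$ and $D = \bigcup_{k\le j-2} R_k$. Because $R_{j-1}=\emptyset$, these two sets partition the vertex set $R$, and both are nonempty by hypothesis (since $R_j\subseteq U$ and $R_{j-2}\subseteq D$). As $R$ induces a connected subgraph, there must be at least one edge $uv$ with $u\in U$ and $v\in D$.

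It remains to pin down the levels of such a crossing edge. Say $u\in R_a$ with $a\ge j$ and $v\in R_b$ with $b\le j-2$. By \autoref{NeighborsInPhi} adjacent vertices satisfy $\abs{a-b}\le 2$; but $a-b\ge j-(j-2)=2$, so in fact $a-b=2$, which forces $a=j$ and $b=j-2$. Thus $uv$ joins $R_j$ to $R_{j-2}$, as claimed.

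The argument is short, and I do not anticipate a serious obstacle. The only point requiring a little care is the first step---justifying that $R$ (with $x,y$ adjoined) induces a connected subgraph---since the definitions place $x$ and $y$ into $R_\ell,R_{-\ell}$ by fiat rather than through the distance formula; but this connectivity follows immediately from $2$-connectivity as above. Everything else is a direct combination of the partition induced by the empty level $R_{j-1}$ and the $\le 2$ level-jump bound.
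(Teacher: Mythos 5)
Your proof is correct and follows essentially the same route as the paper: connectivity of the relevant component combined with the level-jump bound of \autoref{NeighborsInPhi}. The paper phrases it as a shortest path from $R_j$ to $R_{j-2}$ inside $A_1$ whose first step out of $R_j$ must land in $R_{j-2}$, whereas you extract a crossing edge of the partition $U\sqcup D$ and pin down its endpoints; this is the same mechanism, with your version being marginally more careful about the levels containing $x$ and $y$.
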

\begin{proof}
Since $A_1$ is connected, there must be a path between $R_j$ and $R_{j-2}$. Consider a shortest such path. It must completely reside in $R_j\cup R_{j-1}\cup R_{j-2}$, and since $R_{j-1}=\emptyset$, its first step outside of $R_j$, must be to a vertex in $R_{j-2}$, as claimed.
\end{proof}
The previous two lemmas clearly apply to $L$ as well.

\begin{lemma}\label{No Antipodal Distance difference}
If $\abs{j}< \ell$, then either $R_j$ or $L_{-j}$ must be empty.
\end{lemma}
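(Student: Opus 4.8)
The plan is to argue by contradiction: assume that for some $j$ with $|j| < \ell$ both $R_j$ and $L_{-j}$ are nonempty, and pick $v \in R_j$ and $w \in L_{-j}$. Note first that since $|j| < \ell$ while $x \in R_\ell$ and $y \in R_{-\ell}$, both $v$ and $w$ are genuine vertices of $A_1$ and $A_2$ respectively, with $v,w \notin S$. The goal is to exhibit two distinct $v$-$w$ geodesics, violating geodeticity.

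The first step is to compute $d(v,w)$. Since $v \in A_1$ and $w \in A_2$ lie in different components of $G \setminus S$, every $v$-$w$ path must pass through $x$ or through $y$. Writing $a = d(v,x)$ and $b = d(w,x)$, the defining relations of $R_j$ and $L_{-j}$ give $d(v,y) = a + j$ and $d(w,y) = b - j$. Hence any path crossing $x$ has length at least $a + b$, and any path crossing $y$ has length at least $(a+j) + (b-j) = a + b$ as well. Consequently $d(v,w) = a + b$, and both $\pi(v,x)*\pi(x,w)$ and $\pi(v,y)*\pi(y,w)$ are shortest $v$-$w$ paths.

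By geodeticity these two shortest paths must each coincide with the unique geodesic $\pi(v,w)$. But the first passes through $x$ and the second through $y$, so $\pi(v,w)$ would have to contain both cut vertices; this is the crux, and the main obstacle is to rule it out cleanly. I would do so by examining the order in which $x$ and $y$ occur along $\pi(v,w)$: if they appear as $v,\ldots,x,\ldots,y,\ldots,w$ then $d(v,w) = a + d(x,y) + (b-j) = a + b + (\ell - j)$, forcing $j = \ell$; if they appear as $v,\ldots,y,\ldots,x,\ldots,w$ then $d(v,w) = (a+j) + d(x,y) + b = a + b + (\ell + j)$, forcing $j = -\ell$. Either way $|j| = \ell$, contradicting $|j| < \ell$. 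Hence the two shortest paths are genuinely distinct, contradicting geodeticity, so one of $R_j, L_{-j}$ must be empty.

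Geometrically this is the ``antipodal'' phenomenon behind the lemma's name: the two geodesics through $x$ and through $y$ close up into an even cycle on which $v$ and $w$ are antipodal, and as in \autoref{Two Components} a pair of equal-length arcs between antipodal vertices is exactly what geodeticity forbids. I do not anticipate difficulties beyond this bookkeeping; the computation of $d(v,w)$ works uniformly for every $j$ with $|j|<\ell$ (including $j=0$ and $j<0$), so no separate symmetry reduction is needed.
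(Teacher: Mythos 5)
Your proof is correct and follows essentially the same route as the paper: both arguments compute that the routes through $x$ and through $y$ have the same length $d(v,x)+d(x,w)=d(v,y)+d(y,w)$ and then use $\abs{j}<\ell$ to show geodeticity is violated. The only cosmetic difference is how distinctness is packaged --- the paper directly argues that the shortest $v,x,w$ path cannot traverse $y$ (and vice versa), while you assume the two geodesics coincide and derive $\abs{j}=\ell$ from the order of $x,y$ along the common path; these are equivalent instances of the same bookkeeping.
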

\begin{proof}
We show that if there exist vertices $v\in R_j$, $u\in L_{-j}$, then $G$ is not geodetic, because $d(u,v)$ is realized by two distinct paths. Any $u,v$ path must clearly traverse either $x$ or $y$. But the assumption that $\abs{j} < \ell$ implies that the shortest $u,x,v$ path cannot traverse $y$
and the shortest $u,y,v$ path cannot traverse $x$. In particular,
the shortest $u,x,v$ path and $u,y,v$ path in $G$ are distinct. Moreover, they have the same length, because the shortest length of a $u,y,v$ resp.\ $u,x,v$ paths is $d(u,y) + d(y,v)$  resp.\ $d(u,x) + d(x,v)$. Since $d(u,y) - d(u,x) = j = d(v,x) - d(v,y)$, they have the same length, as claimed.
\end{proof}
It follows from \autoref{No Antipodal Distance difference} that at least one of $R_0, L_0$ must be empty. 
We denote below $\Pi= x, x_1, x_2, \ldots, x_{\ell-1}, y$. 
In particular, if $\ell=1$, then $x_1=y$. We need the following lemma.
\begin{lemma}
\label{Find square}
If both $R_{\ell-1}=R_{\ell-3}=\emptyset$, then $R_\ell= \{x\}$. In particular, $x_{1}$ is the only neighbor of $x$ in $A_1\cup\Pi$.  
\end{lemma}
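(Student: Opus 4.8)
The plan is to argue by contradiction: I would assume $R_\ell\cap A_1\neq\emptyset$ and show that $x$ would then be a cut vertex of $G$, contradicting $2$-connectivity. The heart of the matter is to prove that every vertex of $R_\ell\cap A_1$ has \emph{all} of its neighbors inside $R_\ell$. Once this is known, $R_\ell=\{x\}\cup(R_\ell\cap A_1)$ can only be exited through $x$, so deleting $x$ disconnects $R_\ell\cap A_1$ from $y$.

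So I would fix $v\in R_\ell\cap A_1$ and examine its neighbors. First, $v$ is not adjacent to $y$, since $v\in R_\ell$ gives $d(v,y)=d(v,x)+\ell\ge \ell+1\ge 2$. Hence every neighbor of $v$ lies in $A_1\cup\{x\}\subseteq R$, and by \autoref{NeighborsInPhi} its index differs from $\ell$ by at most $2$; as no index exceeds $\ell$ and $R_{\ell-1}=\emptyset$, each neighbor lies in $R_\ell$ or $R_{\ell-2}$. Excluding the $R_{\ell-2}$ case is where geodeticity enters. Because $d(v,y)=d(v,x)+d(x,y)$, the unique geodesic satisfies $\pi(v,y)=\pi(v,x)*\Pi$; a one-line computation (subtracting $d(v,w)+d(w,x)=d(v,x)$ from $d(v,w)+d(w,y)=d(v,x)+\ell$) shows every $w\in\pi(v,x)$ has $d(w,y)-d(w,x)=\ell$, so the whole initial segment $\pi(v,x)$ lies on the line $R_\ell$. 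A hypothetical neighbor $w\in R_{\ell-2}$ is placed by \autoref{NeighborsInPhi} at the point with $d(w,y)=d(v,y)-1$, so $w$ lies on a $v,y$ geodesic; uniqueness then forces $w$ to be the second vertex of $\pi(v,y)$, which we just saw lies in $R_\ell$ — contradicting $w\in R_{\ell-2}$. Thus $v$ has no neighbor in $R_{\ell-2}$, and all its neighbors lie in $R_\ell$.

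With this claim, assume $R_\ell\cap A_1\neq\emptyset$. Since every vertex of $R_\ell\cap A_1$ has all neighbors in $R_\ell=\{x\}\cup(R_\ell\cap A_1)$, there are no edges between $R_\ell\cap A_1$ and $V(G)\setminus(R_\ell\cup\{x\})$, so $R_\ell\cap A_1$ is a union of connected components of $G\setminus\{x\}$. As $y\notin R_\ell\cap A_1$, the graph $G\setminus\{x\}$ is disconnected, making $x$ a cut vertex and contradicting $2$-connectivity; hence $R_\ell=\{x\}$. For the final sentence, any neighbor of $x$ in $A_1$ has index in $\{\ell,\ell-1,\ell-2\}$ by \autoref{NeighborsInPhi}; the first two are ruled out by $R_\ell=\{x\}$ and $R_{\ell-1}=\emptyset$, so such a neighbor lies in $R_{\ell-2}$, which forces $d(\cdot,x)=1$ and $d(\cdot,y)=\ell-1$. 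Then $x$ followed by the geodesic to $y$ is an $x,y$ geodesic of length $\ell$, so by uniqueness it equals $\Pi$ and the neighbor is exactly $x_1$.

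I expect the main obstacle to be the middle step: verifying cleanly that $\pi(v,x)$ stays on the line $R_\ell$ and then invoking uniqueness of $\pi(v,y)$ to kill the $R_{\ell-2}$ neighbor; the remainder is bookkeeping with \autoref{NeighborsInPhi} and the two-component structure. I note that in this route only $R_{\ell-1}=\emptyset$ seems essential for deducing $R_\ell=\{x\}$; the hypothesis $R_{\ell-3}=\emptyset$ does not appear to be used, so either a different (perhaps ``square''-finding) argument is intended, or it is merely the convenient form in which the lemma is later applied.
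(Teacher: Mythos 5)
Your proof is correct, and it closes the argument by a genuinely different route than the paper's. The shared engine is the impossibility of an edge from $R_\ell\setminus\{x\}$ to $R_{\ell-2}$: your middle step (that $\pi(v,x)$ stays on the diagonal $R_\ell$, and that a neighbor $u\in R_{\ell-2}$ of $v\in R_\ell\cap A_1$ would lie on a shortest $v,y$ path, so uniqueness forces $u$ to be the second vertex of $\pi(v,y)=\pi(v,x)*\Pi$, hence in $R_\ell$ --- contradiction) is exactly the computation the paper performs, phrased there as the existence of two distinct $v_\xi,y$ geodesics, $\pi(v_\xi,x)*\Pi$ versus $v_\xi*\pi(u_\xi,y)$. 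The difference is in how such an edge enters and how the contradiction is drawn. The paper assumes $R_\ell\supsetneq\{x\}$, uses $\deg(x_1)\ge 3$ together with $R_{\ell-1}=R_{\ell-3}=\emptyset$ to force a neighbor of $x_1$ into $R_{\ell-2}$, and then applies \autoref{lemma: existance of SE edges} inside $A_1$ to \emph{produce} an edge between $R_\ell\setminus\{x\}$ and $R_{\ell-2}$, contradicting geodeticity; you instead show directly that no vertex of $R_\ell\cap A_1$ has \emph{any} neighbor outside $R_\ell$, so $x$ would be a cut vertex, contradicting the standing $2$-connectivity hypothesis (legitimate here, since all of Section 2 operates under it). Your closing observation is accurate: in the paper $R_{\ell-3}=\emptyset$ is used only to place the third neighbor of $x_1$ in $R_{\ell-2}$ so that \autoref{lemma: existance of SE edges} applies, and your route shows the conclusion $R_\ell=\{x\}$ already follows from $R_{\ell-1}=\emptyset$ alone, with no appeal to $\delta(G)\ge 3$ at this point; this even sidesteps a step the paper glosses over (ruling out that $x_1$'s extra neighbor lands in $R_\ell$ or $R_{\ell-4}$, which needs $\varphi^{-1}(0,\ell)=\{x\}$ resp.\ another geodeticity argument). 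One point worth spelling out in your write-up: when you conclude that $\pi(v,x)$ ``lies on the line $R_\ell$,'' you should note that its vertices indeed belong to $A_1\cup\{x\}$ (it cannot pass through $y$, since $y\in\pi(v,x)$ would give $d(v,x)=d(v,x)+2\ell$), so the index computation really places them in $R_\ell$ rather than $L_\ell$; with that one-liner added, your lemma is a strictly stronger statement with fewer hypotheses, at the cost of trading the paper's purely geodetic contradiction for a connectivity one.
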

\begin{proof}
The vertex $x_1$ has degree at least $3$, and must therefore have a neighbor in $R$.
But by assumption $R_{\ell-1} = R_{\ell-3} = \emptyset$, so $x_1$ must have a neighbor 
in $R_{\ell - 2}$. Consequently, $x_1$ is not the only vertex in $R_{\ell - 2}$.
The graph $\varphi[A_1]$ is connected, and since $\varphi^{-1}(0,\ell)=\{x\}$, it does not contain the vertex 
$(0,\ell)$, nor the edge $(0,\ell)\sim (1,\ell-1)$. Let us assume toward contradiction that $R_\ell$ is comprised not only of $x$. Then $\varphi[R_\ell]\setminus\{(0,\ell)\}$ and $R_{\ell -2}$, are nonempty whereas $R_{\ell-1} = \emptyset$. By \autoref{lemma: existance of SE edges}, there exists an edge $(\xi,\xi+\ell)\sim (\xi+1,\xi+\ell - 1)$. Let $v_\xi u_\xi\in E(G)$ be a pre-image of this edge. Namely, $\varphi(v_\xi) = (\xi,\xi+\ell), \varphi(u_\xi) = (\xi + 1, \xi + \ell -1)$. Clearly $\pi(v_\xi,x)$ must be contained in $R_\ell$. Moreover, $\pi(u_\xi,y)$ does not go through $x$ because $d(u_\xi,y) - d(u_\xi,x) = \ell-2$. Therefore, $\pi(v_\xi,x)*\Pi$ and $v_\xi * \pi(u_\xi,y)$ are two distinct $v_\xi,y$ geodesics - contrary to the assumption of geodeticity.

\end{proof}
A similar lemma can be proved for $R_{-\ell}$: If both $R_{1 - \ell} = R_{3 - \ell} = \emptyset$, then $R_{-\ell} = \{y\}$. In particular, $x_{\ell-1}$ is the only neighbor of $y$ in $A_1\cup\Pi$.  
\begin{figure}[H]
    \centering
     \includegraphics[scale=.86]{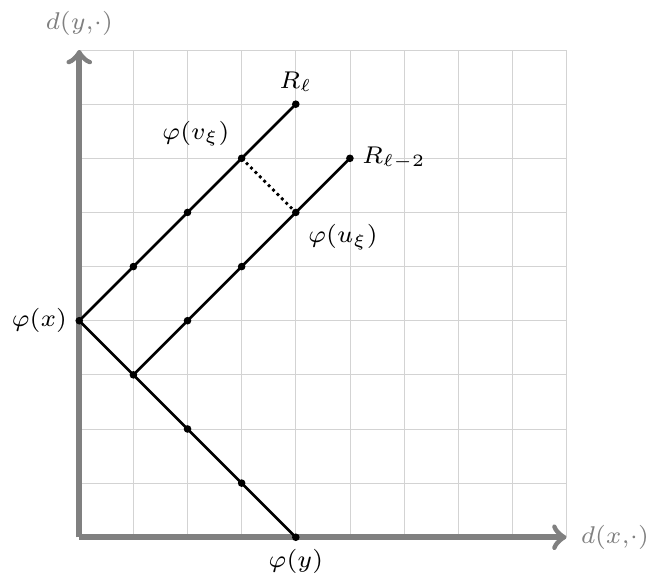}
    \caption{\centering Illustration of \autoref{Find square}. The existence of the dotted edge follows from the assumption that $x$ is not the only vertex of $R_\ell$.}
    \label{fig: visual of south east edge}
\end{figure}

We can now prove \autoref{No2Connected}.
\notwoconnected*
\begin{proof}
The argument runs as follows: The sets $R_j$ are nonempty for every other value of $j$. To wit, $R_{\ell - 2j}$ is nonempty for any $1\le j\le \ell-1$, since $R_{\ell - 2j}$ contains the vertex $x_j$. But then \autoref{No Antipodal Distance difference} implies that $L_{\ell-2j}= \emptyset$, see \autoref{fig:Stage 2}. We claim that $L_{\ell - 1}$ and $L_{1-\ell}$ are nonempty, thus repeated application of \autoref{lemma: existance of SE edges} results in $L_{k}\neq\emptyset$ for any $k\not\equiv \ell \mod 2$. Therefore, $R_k = \emptyset$ for such $k$. Now we satisfy the assumptions of \autoref{Find square}, so $R_\ell = \{x\}$. But then $\{x_1,y\}$ is a vertex cut. If $xy\in E(G)$, then $x_1 = y$, contrary to $G$ being $2$-connected. Otherwise,  $d(x_1,y)<d(x,y)$ - contrary to the minimality of $\ell$.

It is left to justify that $L_{\ell-1},L_{1-\ell}$ are nonempty. The neighbors of
$x$ in $A_2$ reside in $L_\ell, L_{\ell-1}$. If $x$ does not have a neighbor in $L_\ell$, we are done, so suppose $x$ has a neighbor in $L_\ell$, and $y$ has a neighbor $v_y\in L_{-\ell}\cup L_{1-\ell}$. Since $A_2$ is connected, there must be a path connecting these vertices. Consider a shortest such path, and its first step outside of $L_{\ell}$. It cannot be a vertex in $L_{\ell-2}$, so it must be in $L_{\ell-1}$.
\end{proof}
\begin{figure}[H]
\centering
    \begin{subfigure}[b]{0.45\textwidth}
    \centering
    \includegraphics{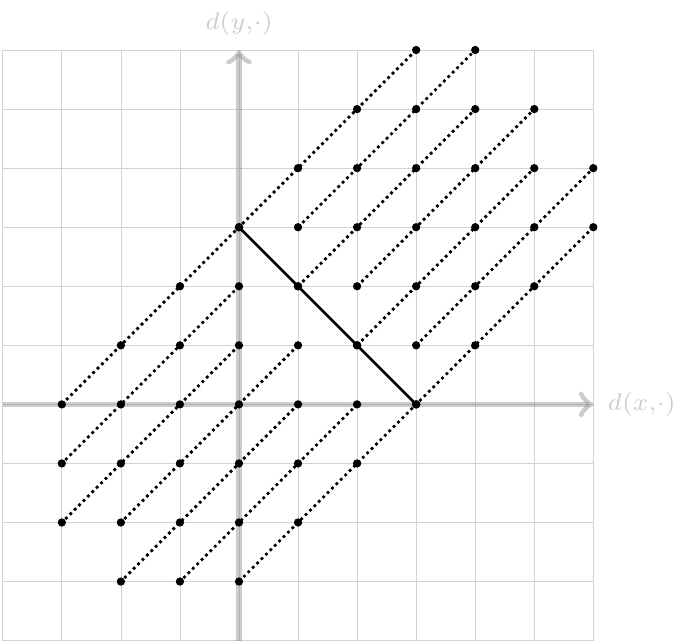}
    \caption{Stage 1}
    \end{subfigure}
    \begin{subfigure}[b]{0.45\textwidth}
    \centering
    \includegraphics{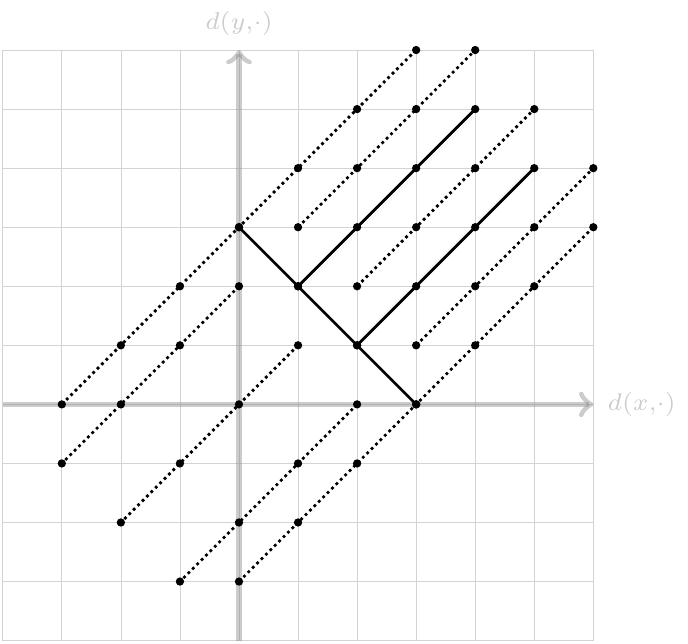}
    \caption{Stage 2: $R_{\ell - 2j}\neq \emptyset$}
    \label{fig:Stage 2}
    \end{subfigure}
    \begin{subfigure}[b]{0.45\textwidth}
    \centering
    \includegraphics{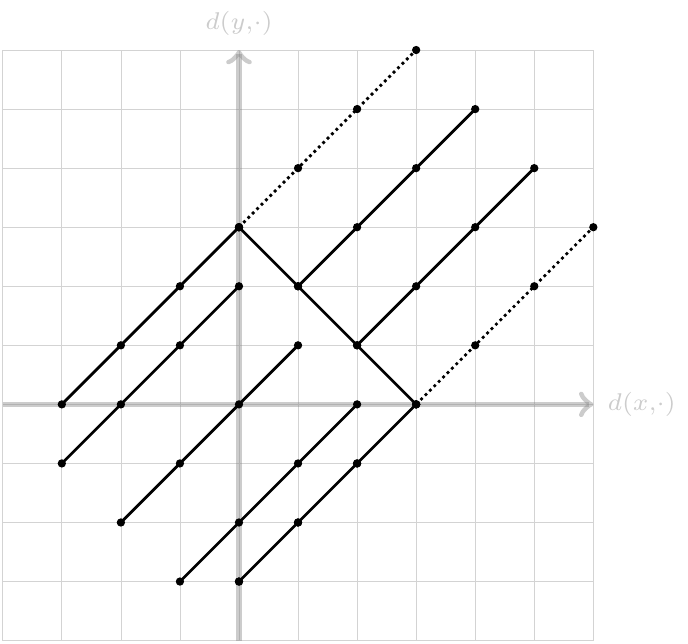}
    \caption{Stage 3: $L_k \neq \emptyset$ for $k
    \not\equiv \ell \mod 2$}
    \label{fig:Stage 3}
    \end{subfigure}
    \begin{subfigure}[b]{0.45\textwidth}
    \centering
    \includegraphics{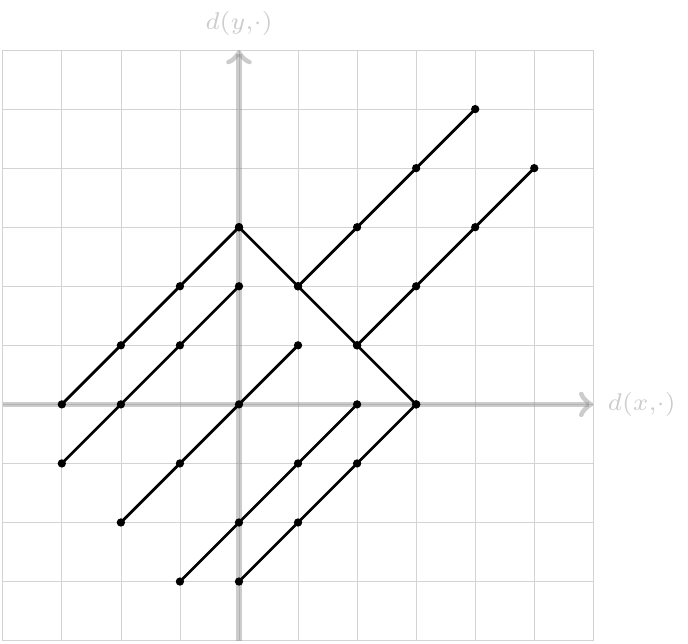}
    \caption{Stage 4: $R_\ell = \{x\}$}
    \label{fig:Stage 4}
    \end{subfigure}
    \caption{Illustration of the proof: Black lines represent diagonals which are known to be nonempty. Dotted lines stand for the presently undecided cases. A diagonal whose status is decided becomes black if proven nonempty, and deleted if empty.}
\end{figure}

\section{Geodetic blocks of diameter 4 and 5 \label{point_flag_section}}
In this section, we construct two families of geodetic blocks, the graphs in which
have diameter $4$ and $5$. We need some notation first. Let $v$ be a vertex in a
graph $G = (V,E)$ and let $i\ge 0$ be an integer. Clearly $|d(v,x)-d(v,y)|\le 1$ for every edge $e=xy$ in $G$. 
We say that $e$ is {\em $v$-horizontal} resp.\ {\em $v$-vertical} if $d(v,x)=d(v,y)$ resp.
$|d(v,x)-d(v,y)|= 1$. Note that every edge in every shortest $v\to u$ path is $v$-vertical.

\begin{proposition}
$G$ is geodetic if and only if for every $v\in V$ the $v$-vertical edges form a spanning tree.
\end{proposition}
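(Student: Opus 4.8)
The plan is to fix a vertex $v$ and study the spanning subgraph $T_v\subseteq G$ formed by the $v$-vertical edges, organized by the breadth-first layers $L_i=\{u:d(v,u)=i\}$; a $v$-vertical edge joins consecutive layers while a $v$-horizontal edge stays within one. The first thing I would record is that $T_v$ is \emph{always} connected and spanning, regardless of geodeticity: every $u$ lies on some shortest $v,u$ path, and since every edge of such a path is $v$-vertical (as noted just before the statement), $u$ is connected to $v$ inside $T_v$. Hence $T_v$ is a tree if and only if it has exactly $|V|-1$ edges, i.e.\ if and only if it is acyclic, and the entire statement reduces to controlling the number of edges of $T_v$.

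The key device is to assign each $v$-vertical edge $xy$ to its endpoint farther from $v$ (exactly one of $d(v,x),d(v,y)$ is larger, by one). This defines a map $E(T_v)\to V\setminus\{v\}$. Surjectivity holds unconditionally: any $u\ne v$ has a neighbor in $L_{d(v,u)-1}$, namely the penultimate vertex of a shortest $v,u$ path, and the corresponding edge is assigned to $u$. The map is injective exactly when no vertex has two neighbors in the layer below it. I would link this to shortest-path counts: if $N(u)$ denotes the number of shortest $v,u$ paths, then $N(v)=1$ and $N(u)=\sum_w N(w)$ over neighbors $w$ of $u$ in $L_{d(v,u)-1}$, so ``all shortest paths from $v$ are unique'' (all $N(u)=1$) is equivalent to ``every $u\ne v$ has a single neighbor one layer closer to $v$'', which is precisely injectivity of the assignment map.

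Assembling these gives both directions. If $G$ is geodetic, then shortest paths from every $v$ are unique, so the assignment map is a bijection, $|E(T_v)|=|V|-1$, and the connected spanning graph $T_v$ is a tree. Conversely, if $T_v$ is a tree for every $v$, then for any pair $a,b$ every shortest $a,b$ path lies in $T_a$ and is therefore forced to equal the unique $a,b$ path of the tree $T_a$; hence all shortest $a,b$ paths coincide and $G$ is geodetic.

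I expect the only genuinely delicate point to be isolating the single place where geodeticity is used: connectedness and spanningness of $T_v$ and surjectivity of the assignment map all follow from mere connectivity of $G$ and the definition of $v$-vertical, whereas geodeticity enters only to force injectivity, i.e.\ the uniqueness of a neighbor in the previous layer. Once this separation is made explicit, both implications are short. The one subtlety to handle carefully in the converse is not to assert a priori that the unique $a,b$ path of $T_a$ is itself a geodesic; instead one observes that every geodesic already sits in $T_a$ and must therefore coincide with that unique tree path, which is exactly what delivers uniqueness.
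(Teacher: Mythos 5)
Your proposal is correct and takes essentially the same route as the paper: both directions rest on the two facts that every edge of a geodesic from $v$ is $v$-vertical and that geodeticity at $v$ amounts to each vertex having a unique neighbor in the previous BFS layer. Your edge-counting bijection is a repackaging of the paper's identification of the $v$-vertical edges with a BFS tree (a vertical non-tree edge there is exactly a second parent here), and your converse is the contrapositive of the paper's observation that two distinct geodesics yield a cycle of $v$-vertical edges.
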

\begin{proof}
It is easy to see that the $v$-vertical edges form a spanning subgraph in every connected graph.
Suppose that $G$ is not geodetic, and let us find some vertex $w$ such that the $w$-vertical 
edges in $G$ do not form a spanning tree, as there is a cycle comprised of $w$-vertical edges. Indeed,  
since $G$ is not geodetic, it has vertices $v,u$ with two distinct shortest paths between them $\pi\neq\pi'$.
But all the edges in $\pi,\pi'$ are $v$-vertical, and together they contain a cycle, as claimed.

Conversely let $v$ be a vertex in a geodetic graph $G$,
and let $T$ be a BFS tree rooted at $v$. Clearly all edges in $T$ are
$v$-vertical, and as we show, all edges $xy\notin T$ are $v$-horizontal. Indeed, 
if $xy$ is $v$-vertical, with $d(v,x) = i$ and $d(v,y) = i+1$, 
this yields two distinct shortest paths from $v$ to $y$.
\end{proof}

We mostly follow the notation and terminology of \cite{van2001course}.
Let $q$ be a prime power, and
let $PG_2(q)$ and $AG_2(q)$ be a projective resp.\ affine plane of order $q$. 
The point sets of these geometries is denoted by $P$, and
their sets of lines (blocks) by $\Ll$. Their point-line incidence relation
is denoted by $\Ii \subset P\times \Ll$. 
Elements of $\Ii$ are called \emph{Flags}. The \emph{Levi Graph} of an incidence structure $\SS = (P,\Ll,\Ii)$, 
denoted $Levi(\SS)$ is the bipartite graph with vertex sets $P\sqcup \Ll$, 
where $p$ and $L$ are neighbors iff $p$ is incident with $L$.

The \emph{Flag graph} of $\SS$ is denoted $Flag(\SS)$. Its vertex set is $P\sqcup \Ii$.
It has two kinds of edges: between a point $p$ and flag $(p,L)$. In addition $(p,L)\sim (p',L)$
for every two points $p,p'$ of the same line $L$. As we show below
$Flag(\SS)$ is geodetic, and has diameter $4$ when $\SS = PG_2(q)$ and $5$ if $\SS = AG_2(q)$. 
To fix ideas, associated with each $L\in \Ll$
in $Levi(\SS)$ is a {\em porcupine}, a clique of size $|L|$, plus an
edge $p\sim(p,L)$ emanating from $(p,L)$ for every $p\in L$.
Therefore, to every simple path $Q$ in $Levi(\SS)$ 
there corresponds a simple path $\hat{Q}$ in $Flag(\SS)$. Namely,
if $Q$ traverses through $L$, that is $[p,L,p']$, the corresponding steps in $\hat{Q}$
$[p,(p,L),(p',L),p']$, which is a simple path in $Flag(\SS)$. Recall
that a graph is $2$-connected if and only if every two of its vertices lie on a simple cycle. We conclude:

\begin{cor}\label{levi:cohen}
If $Levi(\SS)$ is $2$-connected, then so is $Flag(\SS)$.
\end{cor}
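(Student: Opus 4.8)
The plan is to use the stated characterization that a graph is $2$-connected if and only if every two of its vertices lie on a common simple cycle. So I would fix two arbitrary vertices of $Flag(\SS)$ and exhibit a simple cycle through both. The vertex set is $P\sqcup \Ii$, so there are three cases for the pair: two points, two flags, or a point and a flag. In each case I want to lift a simple cycle from $Levi(\SS)$ through the corresponding ``parent'' vertices, using the path-lifting map $Q\mapsto \hat Q$ described just above the statement.

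The key construction is the correspondence already set up in the excerpt: a point $p\in P$ is its own parent in $Levi(\SS)$, and a flag $(p,L)\in\Ii$ sits on the porcupine of $L$, adjacent to both $p$ and the clique of $L$. First I would reduce each target vertex to a vertex of $Levi(\SS)$: a point maps to itself, and a flag $(p,L)$ I would ``represent'' by its line $L$ (or by the subpath $[p,L]$). Then, since $Levi(\SS)$ is $2$-connected, its two representative vertices lie on a simple cycle $C$ in $Levi(\SS)$. Lifting $C$ via the hat-map produces a closed walk $\hat C$ in $Flag(\SS)$; the main point is that this lift is a \emph{simple} cycle, which holds because the lift replaces each traversal $[p,L,p']$ of a line vertex by the internally-disjoint segment $[p,(p,L),(p',L),p']$ inside that line's porcupine, and distinct line vertices of $C$ use disjoint porcupines. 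Finally I would check that the two prescribed vertices actually appear on $\hat C$: a point appears directly, and for a flag $(p,L)$ I would choose the simple cycle $C$ so that it traverses $L$ exactly through the pair $\{p,p'\}$ with $p$ one of the two points adjacent to $L$ on $C$ — then the lifted segment $[p,(p,L),(p',L),p']$ contains the flag vertex $(p,L)$ as desired.

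The step I expect to be the main obstacle is controlling \emph{which} flags appear on the lifted cycle. The lift of a generic simple cycle through $L$ only produces the two flag vertices $(p,L)$ and $(p',L)$ determined by $C$'s entry and exit points at $L$; an arbitrary prescribed flag $(p_0,L)$ need not be among them. So when the target is a flag $(p_0,L)$, I must arrange that the simple cycle $C$ in $Levi(\SS)$ enters or leaves $L$ precisely at the point $p_0$. Since $Levi(\SS)$ is $2$-connected and $L$ is incident to at least two points (true for any projective or affine plane of order $q\ge 2$), I can route a simple cycle through $L$ whose two $L$-neighbors are any chosen pair of points on $L$, in particular a pair including $p_0$; this is exactly where $2$-connectivity of $Levi(\SS)$ is used. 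The two-point and point-flag cases follow by the same device applied to one or both endpoints, and the two-flag case by handling the (possibly equal) parent lines simultaneously.
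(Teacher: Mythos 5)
Your proposal is correct and takes the same route as the paper, which derives the corollary immediately from the hat-lift $Q\mapsto \hat{Q}$ together with the characterization of $2$-connectivity by simple cycles through every pair of vertices. You merely make explicit the one detail the paper leaves implicit, namely routing the cycle in $Levi(\SS)$ through a prescribed edge $p_0L$ so that the flag $(p_0,L)$ appears on the lifted cycle; this is legitimate, since in a $2$-connected graph any two prescribed edges (or an edge and a vertex) lie on a common simple cycle.
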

\subsection{Properties of $Flag(AG_2(q))$}
Recall the following properties of $AG_2(q)$:
\begin{enumerate}
    \item \label{numofitems} It has $q^2$ points and $q^2 + q$ lines.
    \item Every point is incident with $q+1$ lines
    \item \label{pointsinline}Every line has $q$ points
    \item If a point $p$ is not in a line $L$, then there exists a unique line $L'$ with $p\in L'$ such that $L$ and $L'$ are disjoint. The common practice is to say that $L$ and $L'$ are parallel and denote $L\parallel L'$.
\end{enumerate}
We denote $\mathscr{F}(q) = Flag(AG_2(q))$. Properties \ref{numofitems} and \ref{pointsinline} imply that the number of vertices in $\mathscr{F}(q))$ is $q^3 + 2q^2$. 
We denote by $L_{\alpha,\beta}$ the unique line that contains the two points $\alpha,\beta$.

\begin{proposition}
\label{prop: flag_graph_2_connected}
 $\mathscr{F}(q)$ is 2-connected.
\end{proposition}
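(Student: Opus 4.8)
The plan is to invoke \autoref{levi:cohen}, which reduces the task to showing that $Levi(AG_2(q))$ is $2$-connected. By the characterization just recalled before the corollary, a graph is $2$-connected if and only if every pair of vertices lies on a common simple cycle, so it suffices to exhibit, for any two vertices of $Levi(AG_2(q))$, a simple cycle through both. Since $Levi(AG_2(q))$ is bipartite with parts $P$ (points) and $\Ll$ (lines), there are three kinds of pairs to handle: point-point, line-line, and point-line. I expect the cleanest route is to use the high degree of regularity and the incidence axioms of the affine plane to build these cycles explicitly, rather than to prove a general edge-connectivity bound.

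First I would record the local structure: in $Levi(AG_2(q))$ every point has degree $q+1$ (Property 2) and every line has degree $q$ (Property 3), and the plane has order $q \geq 2$, so all degrees are at least $2$. The key geometric facts I would lean on are that any two distinct points lie on a unique common line, any two distinct lines meet in at most one point, and—crucially for the affine case—the existence of parallels (Property 4). For the point-point case, given points $\alpha \neq \beta$, the line $L_{\alpha,\beta}$ gives a path $\alpha - L_{\alpha,\beta} - \beta$; to close this into a cycle I need a second internally disjoint point-line path between $\alpha$ and $\beta$, which I can obtain by routing through a different line on $\alpha$, a third point, and a line on $\beta$, using that each point lies on $q+1 \geq 3$ lines to guarantee enough room. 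The line-line and point-line cases are symmetric variants of the same idea.

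The step I expect to be the main obstacle is verifying that the two paths I build between a given pair of vertices are genuinely internally vertex-disjoint, so that their union is a simple cycle rather than a closed walk with repeated vertices; this is where the incidence axioms must be applied carefully to rule out unintended coincidences (e.g. two auxiliary lines sharing an unexpected point, or an auxiliary point lying on a line it should avoid). To keep this manageable I would choose the auxiliary point and lines generically, checking that at each junction the uniqueness axioms (two points determine one line, two lines meet in $\leq 1$ point) force the only possible collisions to be avoidable given $q \geq 2$. A convenient uniform strategy is to fix any two distinct lines $L, M$ through a starting point and any two distinct lines through the target, and use a connecting path whose intermediate points are chosen off the already-used lines; the counting $q+1 \geq 3$ and $q \geq 2$ leaves enough freedom. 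Once $Levi(AG_2(q))$ is shown $2$-connected, \autoref{levi:cohen} immediately yields that $\mathscr{F}(q) = Flag(AG_2(q))$ is $2$-connected, completing the proof.
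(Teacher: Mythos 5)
Your proposal is correct and takes essentially the same route as the paper: the paper also proves the proposition by exhibiting, for every pair of vertices, a short simple cycle in the point--line incidence structure of $AG_2(q)$ (a $6$-cycle template through two intersecting lines covering all non-parallel pairs, plus a separate $8$-cycle for a pair of parallel lines), lifted to $\mathscr{F}(q)$ exactly as in \autoref{levi:cohen}. Your case analysis, the choice of auxiliary points and lines with the disjointness checks via the uniqueness axioms, and the special treatment of parallel lines constitute the same argument, merely organized as two internally disjoint paths rather than explicit cycle templates.
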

\begin{proof}
We exhibit a simple cycle through any two vertices in $AG_2(q)$.
Consider all cycles of the form
\[(L_1,x_1,L_{x_1,x_2},x_2,L_2,x,L_1)\]
where $L_1, L_2$ are two distinct intersecting lines with $L_1\cap L_2 = x$, and
$x_1,x_2$ are points in $L_1,L_2$ respectively, other than $x$. 
These yield cycles through any two vertices in $\mathscr{F}(q)$ other than a pair of parallel lines. Finally,
here is a simple cycle through two parallel lines $L_1, L_2$
\[(L_1,x_1,L_{x_1,x_2},x_2,L_2,x'_2,L_{x_1',x_2'},x_1',L_1).\]
Here $x_1,x_1', x_2,x_2'$ are distinct points on $L_1, L_2$. This completes the proof.
\end{proof}

\begin{theorem}
\label{thm: affine_flag_geodetic}
$\mathscr{F}(q)$ is geodetic and of diameter $5$.
\end{theorem}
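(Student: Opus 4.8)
The plan is to establish geodeticity and the diameter bound together by understanding distances in $\mathscr{F}(q)$ explicitly. The key observation is that $\mathscr{F}(q)$ has only two types of vertices, points $p\in P$ and flags $(p,L)\in\Ii$, and the edge structure is highly local: a point $p$ is adjacent exactly to the flags $(p,L)$ through it, and two flags $(p,L),(p',L)$ are adjacent precisely when they share the line $L$. So I would first compute, for each ordered pair of vertex types, the distance as a function of the underlying incidence data. For instance, two points $p\neq p'$ are joined by the path $p,(p,L_{p,p'}),(p',L_{p,p'}),p'$ of length $3$, and one checks this is forced; a point $p$ and a flag $(p',L)$ are at distance depending on whether $p\in L$, $p=p'$, or $p\notin L$; and two flags split according to whether they share a point, share a line, or neither. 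The goal of this case analysis is to produce a formula $d(u,v)$ together with an explicit geodesic in each case, and to verify that the maximum distance attained is exactly $5$ (achieved, I expect, between two flags on parallel lines sharing no point, reflecting the extra parallel-class structure of $AG_2(q)$ that is absent in the projective case).

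Second, to prove geodeticity I would show uniqueness of each geodesic. I would use \autoref{prop: flag_graph_2_connected} only for context; the real engine is the rigidity of incidence in an affine plane, namely property \ref{pointsinline} (two distinct points determine a unique line) together with the parallel-line property \ref{pointsinline}. Given two vertices $u,v$, I would argue that any geodesic must pass through a uniquely determined sequence of ``hubs.'' Concretely, a shortest path from a point $p$ leaving $p$ must first go to some flag $(p,L)$; to continue toward the target it must immediately move to another flag on the same line $L$, then to a point on $L$, and so on. At each junction the intermediate line or point is forced by the requirement that the two relevant points be collinear, and in an affine plane the connecting line is unique when it exists. I would make this precise by induction on the distance computed in the first step, showing that the first edge of any geodesic is determined, then appealing to the subpath being itself a geodesic to a vertex one step closer.

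The main obstacle, and the step I would treat most carefully, is the longest cases, namely the distance-$5$ pairs of flags on parallel lines. Here two flags $(p,L)$ and $(p',L')$ with $L\parallel L'$ cannot be connected through a shared line or a shared point, so a geodesic must route $(p,L)\to p\to (p,L'')\to(p'',L'')\to p''\to(p'',L')$ or similar, and I must verify both that no shorter route exists and that the routing is unique. Uniqueness here hinges on the following: to descend from $L$'s neighborhood to $L'$'s neighborhood the path must pass through a point lying on a line meeting both $L$ and $L'$, and since $L\parallel L'$ the intermediate point and the two connecting lines are pinned down by the collinearity constraints and the uniqueness of lines through pairs of points. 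I would isolate this as the crux of the argument. A secondary subtlety is degenerate small cases (e.g.\ $q=2$) where lines carry only $q=2$ points, so I would check that the formulas and uniqueness claims survive when a line has exactly two points; I expect they do, but it is the kind of edge case where a geodesic could accidentally collapse or bifurcate, so it deserves an explicit check.

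Finally, I would assemble these pieces: the distance formula establishes $\operatorname{diam}(\mathscr{F}(q))=5$, and the per-case uniqueness establishes that between any two vertices there is a unique shortest path, which is exactly the definition of geodetic. I would present the proof organized by the vertex-type cases above, deferring the two parallel-line flag case to the end as the heart of the matter, and using the earlier Claim 1.1 (via an even circuit with antipodal vertices at distance $|C|/2$) as an optional sanity check rather than the main tool, since the direct geodesic-uniqueness argument seems cleaner and also yields the diameter for free.
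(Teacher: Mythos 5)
Your proposal is correct in substance but takes a genuinely different route from the paper. The paper never computes pairwise distances: it first proves a criterion (a graph is geodetic if and only if, for every root $v$, the $v$-vertical edges form a spanning tree), and then, exploiting that $\mathscr{F}(q)$ has only two types of root ($v=p$ or $v=(p,L)$), it describes the BFS spheres $N_i(v)$ and counts the vertical edges level by level; since the vertical edges always contain a BFS tree, the verification that their total is exactly $q^3+2q^2-1=|V|-1$ forces them to be a tree, and the diameter $5$ is read off from the fact that the spheres terminate at $N_5(v)$ in the flag case, with $N_5(v)=\{(y,\ell_y)\mid y\notin L\}$ --- precisely your predicted extremal pairs of flags on parallel lines. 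Your direct approach buys explicit distance formulas and explicit unique geodesics (e.g.\ $d(p,p')=3$ via the unique line $L_{p,p'}$, and distance $5$ between parallel flags realized only through $(p,L),p,(p,L_{p,p'}),(p',L_{p,p'}),p',(p',L')$), which is more elementary and self-contained; the paper's counting argument buys brevity, replacing the full pairwise case analysis by a single numerical identity per root type. One caution if you execute your plan: your three-way split for flag pairs (share a point, share a line, or neither) is too coarse --- the ``neither'' case splits further into the semi-degenerate case where one flag's point lies on the other's line (e.g.\ $p'\in L$, $p\notin L'$), giving distance $3$, the generic intersecting case (distance $4$, unique geodesic through the intersection point $s=L\cap L'$), and the parallel case (distance $5$); you must also rule out the spurious length-$3$ path when both $p'\in L$ and $p\in L'$, which cannot occur since two distinct points lie on a unique line. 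With that refinement your argument goes through, including at $q=2$, where the line-cliques degenerate to single edges but none of the adjacency analysis changes.
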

\begin{proof}
We show that the collection of $v$-vertical edges form a tree for every vertex $v$ in $\mathscr{F}(q)$.
There are just two cases to consider: $v = \bf{p}$, a point in $P$ or a flag $v = \bf{(p,L)}\in \Ii$.
Let $\Vv(i)$ be the number
of $v$-vertical edges $\alpha\beta\in E$ of {\em height} $i$, i.e.,
$\{(d(v,\alpha),d(v,\beta)\} = \{i-1,i\}$. Let $N_i(v)$ be the $i$-sphere centered at $v$, that is $N_i(v) = \cbk{u\in V\mid d(v,u) = i}$. We analyze $N_i(v)$ in either case.
\paragraph{$\mathbf{v = p}$:} 

\begin{enumerate}
    \item Clearly $N_1(\bf{p}) = \cbk{(\bf{p},L)\in \Ii}$, so $\Vv(1) = |N_1(\bf{p})| = q+1$.
    \item Each $(\bf{p},L)\in N_1(\bf{p})$ is adjacent to all the vertices in the clique defined by $L$.
    This contributes $q-1$ vertical edges. Also, $N_2(\bf{p}) = \cbk{(x,L)\mid \bf{p}\in L, x\neq p}$, since
    these cliques are disjoint. It also follows that $\Vv(2) = |N_1(p)|\cdot (q-1) = (q^2-1)$.
    \item Let $(x,L)\in N_2(\bf{p})$. The neighbors of $(x,L)$ are $x$ and $(y,L)$ for $y\neq x$ in $L$.
    All the latter are in $N_2(\bf{p})$, so every $(x,L)\in N_2(\bf{p})$ contributes exactly one vertex
    to $N_3(\bf{p})$. Consequently, $N_3(v) = \cbk{x\mid x\ne \bf{p}}$, and $\Vv(3) = \Vv(2) = (q^2-1)$.
    \item Each $x\ne \bf{p}$ is incident with $q+1$ lines, only one of which is $L_{\bf{p},x}$. Therefore, $x$ is adjacent
    to $q$ flags $(x,L')$, one for each  $L'\ne L_{\bf{p},x}$. These flags are clearly distinct, therefore $N_4(v) = \cbk{(x,L')\mid L'\ne L_{\bf{p},x}}$ and $\Vv(4) = q\cdot \Vv(3) = q(q^2-1)$.
\end{enumerate}
The calculation checks:
\[
    \overbrace{(q+1)}^{\Vv(1)} + \overbrace{(q^2-1)}^{\Vv(2)} + \overbrace{(q^2-1)}^{\Vv(3)} + \overbrace{q(q^2-1)}^{\Vv(4)} = q^3+2q^2-1 = |V(\mathscr{F}(q))| - 1.
\]
\begin{figure}[H]
    \centering
    \includegraphics[scale = .3]{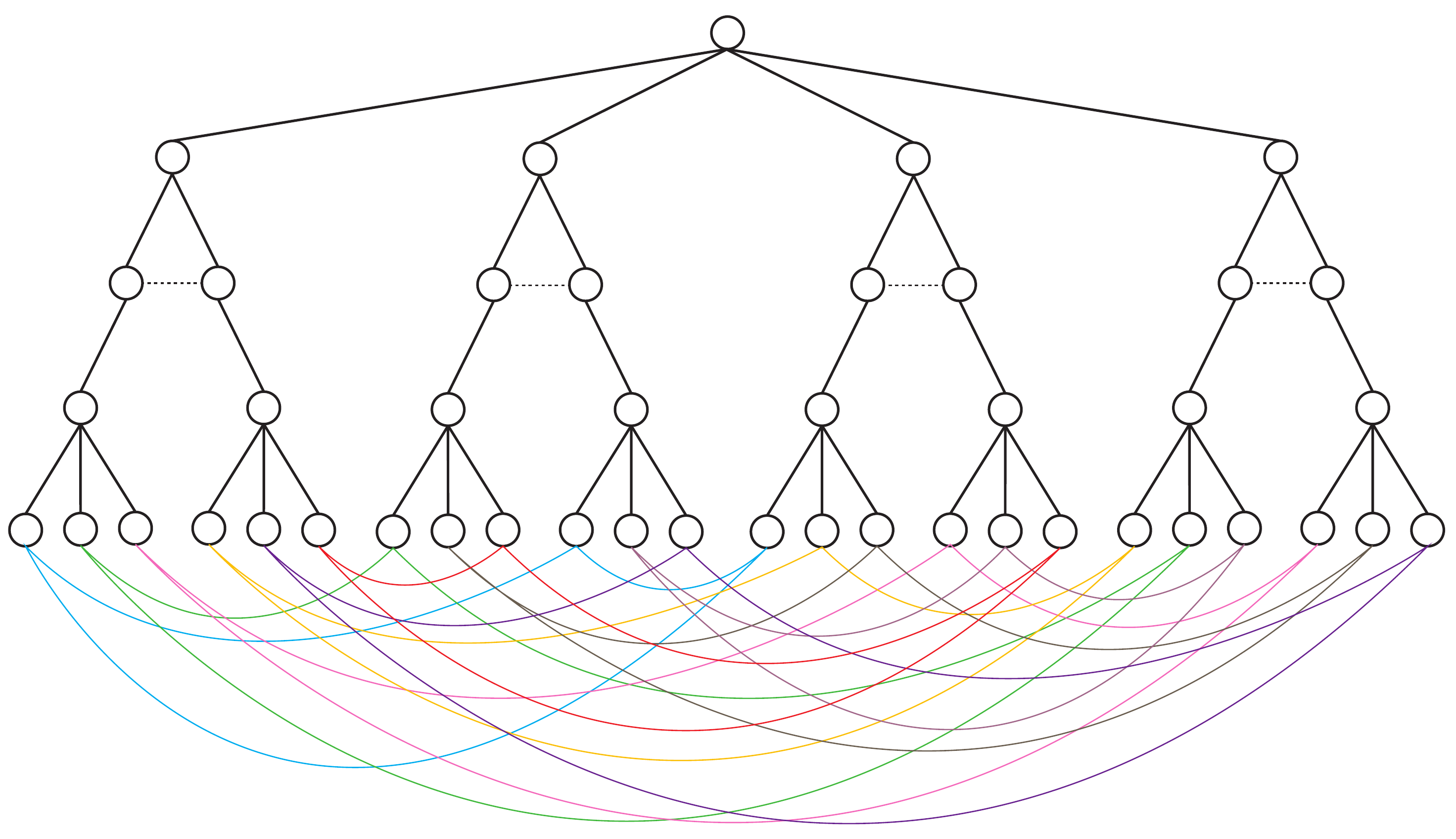}
    \caption{$\mathscr{F}(3)$, as seen from $v=\bf{p}$. The colors have no mathematical significance and are only intended for better visibility.}
    \label{fig:my_label}
\end{figure}

\paragraph{$\mathbf{v = (p,L)}$:} This case is a bit trickier. Let $x\neq\mathbf{p}$ be a point on $\mathbf{L}$.
Let $y$ be a point not on $\mathbf{L}$, and $\ell_y$ the unique line through $y$
that is parallel to $\mathbf{L}$. 
Finally, we denote by $\cbk{\bf{L}^i_x}_{i=0}^q$ the lines through $x$, where $\bf{L} = \bf{L}_x^0$.
\begin{enumerate}
    \item  $N_1(v) = \{\bf{p}\} \sqcup \{(x,\bf{L})\mid x\neq \bf{p}\}$, so $\Vv(1) = 1 + (q-1) = q$.
    We refer below to descendants of $\bf{p}$ as the left vertices, and to descendants of $\{(x,\bf{L})\mid x\neq \bf{p}\}$ as right vertices (see \autoref{fig: second_case}).
    
    \item The neighbors of $\bf{p}$ other than $\bf{(p,L)}$ are $ \cbk{(\bf{p},\bf{L}_\bf{p}^i)}_{i\ne 0}$, so the left side in $N_1(v)$ contributes $q$ edges of height $2$. The right vertices in $N_1(v)$ form a clique, with a porcupine structure. Therefore, each vertex of this clique contributes a single height $2$ edge, namely $(x,\bf{L})\sim x$. So $N_2(\bf{(p,L)}) = \cbk{(\bf{p},\bf{L}^i_{\bf{p}})}_{i\ne 0}\sqcup \cbk{x\mid \bf{p}\ne x\in \bf{L}}$ and $\Vv(2) = q + (q-1)$.
    \item Each left vertex in $N_2(v)$ is adjacent to a clique $\cbk{(y,\bf{L}^i_\bf{p})}_{y\ne \bf{p}}$ of $q-1$ flags, so each $(\bf{p},\bf{L}^i_\bf{p})$ contributes $(q-1)$ distinct edges of height $3$. As for the right side, the edges $x\sim (x,\bf{L}^i_x)$ have height $3$, and each $x\ne \bf{p}$ contributes $q$ of them.\\ So, $N_3(v) = \cbk{(y,\bf{L}^i_\bf{p})\mid y\ne p, i\ne 0}\sqcup \cbk{(x,\bf{L}_x^i)\mid x\ne \bf{p}, i\ne 0}$ and $\Vv(3) = q(q-1) + q(q-1)$.
    \item For every $y \notin \bf{L}$, there holds $L_{\bf{p},y} = \bf{L}_\bf{p}^i$ for some $i\ne 0$. Therefore, the left side vertices of $N_3(v)$ are partitioned into cliques in $N_3(v)$, and cover all $y\notin \bf{L}$. Hence each vertex contributes a unique edge $(y,L_{\bf{p},y})\sim y$ of height $4$. As for the right side: Every edge $(x,\bf{L}_x^i)$ of height $4$ is adjacent to the clique $\cbk{(y,L_x^i)\mid y\ne x}$. There are exactly $q-1$ such vertices for each $(x,\bf{L}_x^i)$. So $N_4(v) = \cbk{y\mid y\notin \bf{L}} \sqcup \cbk{(y,\bf{L}_x^i)\mid x\in \bf{L},y\notin \bf{L}, i\in [q]}$ and $\Vv(4) = q(q-1) + q(q-1)^2$.
    \item The edges $(y,L_x^i)\sim y$ are horizontal (between the two sides of $N_4(v)$), so the only $5$-vertical edges are of the form $y\sim (y,\ell_y)$ - and since $\ell_y$ is unique, $\Vv(5) = q(q-1)$ and $N_5(v) = \cbk{(y,\ell_y)\mid y\notin \bf{L}}$.
\end{enumerate}
Once again, the calculation checks:
\[
\overbrace{1 + (q-1)}^{\Vv(1)} + 
\overbrace{q+(q-1)}^{\Vv(2)} + \overbrace{q(q-1)+q(q-1)}^{\Vv(3)} + \overbrace{q(q-1)+q(q-1)^2}^{\Vv(4)} + \overbrace{q(q-1)}^{\Vv(5)} = q^3 + 2q^2 - 1.
\]
This analysis also establishes that $\mathscr{F}(q)$ has diameter 5.
such edges. Each $x\in L$ lies on $q$ lines other than $L$, so each $x$ contributes $p$ additional $3$-vertical edges of the form $x(x,L')$ for a total of $q(q-1)$ such edges. Each such $L'$ contains $p-1$ points other than $x$, and said points d not lie on $L$. Thus each $(x,L')$ contributes $p-1$ vertical edges of the form $(x,L')(y,L')$, for a total of $q\cdot (q-1)^2$ edges. Neighbors of $(y,L')$ are the points $y$, which we are of distance $4$ from $(p,L)$ - as we see in the fllowing part.
\end{proof}
\begin{figure}[H]
    \centering
    \includegraphics[scale=.3]{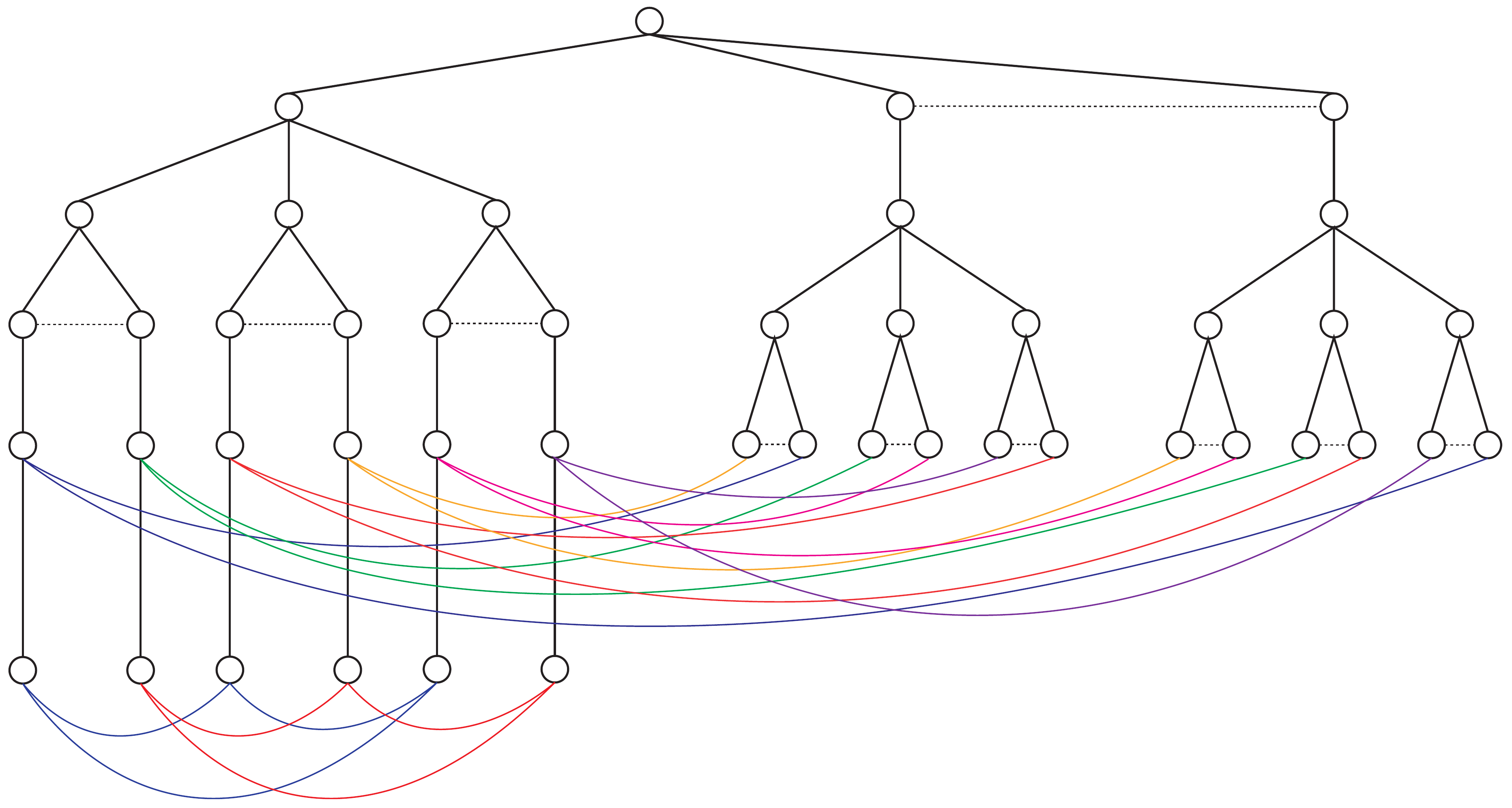}
    \caption{$\mathscr{F}(3)$, as seen from $v = \bf{(p,L)}$.}
    \label{fig: second_case}
\end{figure}

\subsection{Properties of $Flag(PG_2(q))$}
Recall the following properties of $PG_2(q)$:
\begin{enumerate}
    \item It has $q^2 + q + 1$ points and the same number of lines.
    \item Every point is incident with $q+1$ lines, and every line has $q+1$ points.
\end{enumerate}
These properties imply that $\abs{V(Flag(PG_2(q))} = (q+1)^3+1$.
\begin{theorem}
$Flag(PG_2(q))$ is $2$-connected, geodetic and has diameter $4$.
\end{theorem}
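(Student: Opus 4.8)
The plan is to mirror the affine argument of \autoref{thm: affine_flag_geodetic}, using the same two tools: Corollary \ref{levi:cohen} for connectivity, and the characterization proposition stating that a graph is geodetic if and only if the $v$-vertical edges form a spanning tree for every $v$. The only genuinely new point is to explain why the diameter drops from $5$ to $4$, and I expect this to hinge entirely on the fact that $PG_2(q)$ has no parallel lines: every two lines meet.

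For $2$-connectivity I would first verify that $Levi(PG_2(q))$ is $2$-connected and then invoke Corollary \ref{levi:cohen}. Since in $PG_2(q)$ any two points lie on a unique line and any two lines meet in a unique point, one exhibits a common cycle through every pair of vertices of the Levi graph, exactly as in Proposition \ref{prop: flag_graph_2_connected}; the point--point, line--line, and point--line subcases are all routine.

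For geodeticity and diameter I would apply the characterization proposition. Because the $v$-vertical edges always contain a BFS tree, they form a connected spanning subgraph; hence it suffices to show they number exactly $|V(Flag(PG_2(q)))| - 1 = (q+1)^3$, equivalently that $\Vv(i) = |N_i(v)|$ for every $i$ (each non-root vertex has a unique vertical parent). Running BFS from a point $v=\mathbf{p}$ is the easy case: the shells are the $q+1$ flags on $\mathbf{p}$, then the flags $(x,L)$ with $\mathbf{p}\in L$ and $x\neq\mathbf{p}$, then all points $x\neq\mathbf{p}$, and finally the flags $(x,L')$ with $\mathbf{p}\notin L'$. Each shell has an obvious unique-parent structure, the last shell is nonempty, and the four counts sum to $(q+1)^3$, giving diameter exactly $4$ from every point.

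The main obstacle, as in the affine proof, is the flag case $v=(\mathbf{p},\mathbf{L})$. I would again split the descendants into a ``left'' side (vertices reached through the point $\mathbf{p}$) and a ``right'' side (reached through the clique of $\mathbf{L}$), track the four shells, and check uniqueness of vertical parents on each side; the totals $\Vv(1)+\dots+\Vv(4) = (q+1) + 2q + 2q^2 + (q^2+q^3) = (q+1)^3$ will confirm the spanning-tree property. The decisive difference from \autoref{thm: affine_flag_geodetic} is precisely where the affine fifth shell came from: there, a point $y\notin\mathbf{L}$ lay on a unique line $\ell_y$ parallel to $\mathbf{L}$, and the flag $(y,\ell_y)$ could not be reached before level $5$. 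In $PG_2(q)$ every line through $y$ meets $\mathbf{L}$, so no such parallel line $\ell_y$ exists, the flags $(y,\ell_y)$ are absent, and BFS terminates at level $4$. Verifying carefully that every flag $(w,M)$ is indeed reached by level $4$ (separating the cases $M=\mathbf{L}$, $\mathbf{p}\in M\neq\mathbf{L}$, and $\mathbf{p}\notin M$, using $M\cap\mathbf{L}\neq\emptyset$) is the one place demanding attention, and it is exactly this ``no parallels'' feature that yields diameter $4$ rather than $5$.
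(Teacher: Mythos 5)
Your proposal is correct and takes essentially the same route as the paper, which likewise gets $2$-connectivity from \autoref{levi:cohen} applied to $Levi(PG_2(q))$ (only the first cycle type from \autoref{prop: flag_graph_2_connected} is needed, since all lines meet) and proves geodeticity and diameter $4$ by adapting the shell-by-shell vertical-edge count of \autoref{thm: affine_flag_geodetic}, with the fifth shell vanishing exactly because no line $\ell_y$ parallel to $\mathbf{L}$ exists. Your explicit tallies $(q+1)+2q+2q^2+(q^2+q^3)=(q+1)^3=\abs{V(Flag(PG_2(q)))}-1$ are correct and in fact supply the details the paper compresses into ``the numbers change somewhat.''
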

\begin{proof}
By \autoref{levi:cohen}, if $Levi(PG_2(q))$ is $2$-connected, then so is
$Flag(PG_2(q))$. To show that $Levi(PG_2(q))$ is $2$-connected, it suffices to use
the first cycle that is described in the proof of \autoref{prop: flag_graph_2_connected}. 
The proof of geodeticity is a slight adaptation of the proof of \autoref{thm: affine_flag_geodetic}.
The numbers change somewhat, and what's more, since no lines are parallel, we do not reach $N_5(v)$ 
in the second case. Consequently, the diameter is $4$.
\end{proof}

\section{Discussion}
Geodetic blocks with $\delta(G)\ge 3$ 
(which, by Theorem \ref{No2Connected}, are $3$ connected) seem hard to find. 
Specifically, we ask:
\begin{enumerate}
    \item We recall our question whether geodetic blocks of $\delta(G)\ge 3$ can
    have arbitrarily large diameter. Also, can they have arbitrarily large girth?
    The two questions are closely related since the diameter of a graph is at least half its girth. 
    \item So far, we only know of three cubic geodetic blocks - the Petersen Graph, $K_4$ and $Flag(PG_2(2))$. Is this list exhaustive? Is the number of such graphs finite?
    \item Is the study of geodetic graphs related to structural graph theory, and more
    concretely to the family of \emph{even-hole-free graphs} \cite{vuvskovic2010even}? The
    origin of this question is this: If $u,v$ are two antipodal vertices in an induced even cycle
    (aka an even hole) $C$ in a geodetic graph, then the $uv$ geodetic is not included in $C$.
\end{enumerate}

\bibliography{refs}
\bibliographystyle{amsplain}
\end{document}